\theoremstyle{plain} 
\newcounter{wow}
\newtheorem{theorem}    [wow]{Theorem}
\newtheorem{lemma}      [wow]{Lemma}
\newtheorem{cor}  [wow]{Corollary}
\newtheorem{prop}[wow]{Proposition}
\theoremstyle{definition}
\newtheorem*{definition*}           {Definition}
\theoremstyle{remark}
\newtheorem{remark}[wow]{Remark}
\definecolor{prpl}{rgb}{0.7, 0.0, 0.7}
\newenvironment{JT}{\noindent \color{prpl}{\bf JT:} \footnotesize}{}
\newenvironment{BB}{\noindent \color{blue}{\bf BB:} \footnotesize}{}
    \newcommand{\mysect}[1]{
  \subsection{}\hspace{-.5em}\textbf{#1.}
\vskip .5em}
\def\Sym{\operatorname{Sym}}
\def\id{\operatorname{id}}
\def\GL{\operatorname{GL}}
\def\PGL{\operatorname{PGL}}
\def\tr{\operatorname{tr}}
\def\End{\operatorname{End}}
\def\Aut{\operatorname{Aut}}
\def\SL{\operatorname{SL}}
\def\PSL{\operatorname{PSL}}
\def\mult{\operatorname{mult}}
\def\vol{\operatorname{vol}}
\def\gon{\operatorname{gon}}
\def\CUSP{\operatorname{SBC}}
\def\CM{\operatorname{CM}}
\def\Ram{\operatorname{Ram}}
\def\Span{\operatorname{Span}}
\def\Re{\operatorname{Re}}
\def\Im{\operatorname{Im}}
\def\Deg{\operatorname{Deg}}
\def\chone{\operatorname{c_1}}
\def\P{\mathbb{P}}
\def\Q{\mathbb{Q}}
\def\Z{\mathbb{Z}}
\def\C{\mathbb{C}}
\def\R{\mathbb{R}}
\def\G{\mathbb{G}}
\def\F{\mathbb{F}}
\def\H{\mathbb{H}}
\def\D{\mathbb{D}}
\renewcommand{\phi}{\varphi}
\renewcommand{\tilde}[1]{\widetilde{#1}}
\renewcommand{\bar}[1]{\overline{#1}}
\renewcommand{\mod}{\;\mathrm{mod}\;}
\newcommand{\textmod}{\mathrm{mod}}
\def\into{\rightarrow}
\newcommand{\mat}[4]{\begin{pmatrix}#1&#2\\#3&#4\end{pmatrix}}
 \title[$p$-torsion monodromy representations of elliptic curves]{$p$-torsion monodromy representations of elliptic curves over geometric function fields}
 \date{\today}
 \author{Benjamin Bakker}
 \address{B. Bakker:
 Institut f\"ur Mathematik, Humboldt-Universit\"at zu Berlin.
 }
 \email{benjamin.bakker@math.hu-berlin.de}
\author{Jacob Tsimerman}
\address{J. Tsimerman:
Mathematics Department, University of Toronto.}
\email{jacobt@math.toronto.edu}
\begin{document}
\begin{abstract}
Given a complex quasiprojective curve $B$ and a non-isotrivial family $\mathcal{E}$ of elliptic curves over $B$, the $p$-torsion $\mathcal{E}[p]$ yields a monodromy representation $\rho_\mathcal{E}[p]:\pi_1(B)\rightarrow \mathrm{GL}_2(\mathbb{F}_p)$.  We prove that if $\rho_{\mathcal E}[p]\cong \rho_{\mathcal E'}[p]$ then $\mathcal{E}$ and $\mathcal E'$ are isogenous, provided $p$ is larger than a constant depending only on the gonality of $B$.  This can be viewed as a function field analog of the Frey--Mazur conjecture, which states that an elliptic curve over $\mathbb{Q}$ is determined up to isogeny by its $p$-torsion Galois representation for $p> 17$.  The proof relies on hyperbolic geometry and is therefore only applicable in characteristic 0.
\end{abstract}
\maketitle
	
\section{Introduction}

The Frey--Mazur conjecture\footnote{See Fisher \cite{fisher} for a survey of the Frey--Mazur conjecture.  An explicit lower bound on $p$ was originally not specified, but Nicholas Billerey has found a counterexample for $p=17$ \cite{billerey}.}, originating in \cite{mazur}, states that for a prime $p>17$, an elliptic curve over $\Q$ is classified up to isogeny by its $p$-torsion, viewed as a Galois representation (or equivalently, as a finite flat group scheme).  A natural generalization of this conjecture asserts that over a fixed number field $K$ there is a uniform $M_K$ such that for primes $p>M_K$, elliptic curves over $K$ are classified up to isogeny by their $p$-torsion representation. Moreover, one can hope that $M_K$ can be taken to depend only on the degree of $K$. 

 Geometrically, there is a surface $Z(p)$ that parameterizes
triples $(E,E',\phi)$ consisting of a pair of elliptic curves $E,E'$ together with an isomorphism $\phi:E[p]\xrightarrow{\cong}E'[p]$ of their $p$-torsion.  This surface is endowed with natural Hecke divisors $H_m$ parametrizing points for which $\phi$ is induced by a cyclic isogeny of degree $m$. The Frey--Mazur conjecture is equivalent to the statement that for $p> 17$, all rational points of $Z(p)$ lie on one of these divisors\footnote{Note that by a theorem of Mazur, it is only necessary to consider $m\leq 163$.}, and in fact many arithmetic results are conjecturally related to the geometry of $Z(p)$ (see for example \cite{frey}).  For example, the surface $Z(p)$ is of general type for $p>11$ by work of Hermann \cite{hermann}, so the Bombieri--Lang conjecture implies that there are only finitely many rational points on the complement of all the rational and elliptic curves in $Z(p)$.  It is therefore natural to first study curves in $Z(p)$.

As our main result, we prove a function field analogue of the Frey--Mazur conjecture over the function field $K=k(B)$ of a complex curve $B$. Namely, we show that families of elliptic curves over $B$ are classified up to isogeny by the monodromy action on their $p$-torsion for any sufficiently large $p$. In fact, we prove the stronger statement that the constant $M_K$ depends only on the gonality of $B$. Recall that the gonality of an algebraic  curve
is the lowest degree finite map to $\P^1$, which is the analogue of the degree of a number field in the function field setting.  Precisely, we show:

\begin{theorem}\label{main}  Let $k$ be an algebraically closed field of characteristic 0.  For any $N>0$, there exists $M_N>0$ such that for any prime $p>M_N$ and any smooth quasiprojective curve $U$ of gonality $n<N$, non-isotrivial elliptic curves $\mathcal{E}$ over $U$ are classified up to isogeny by their $p$-torsion local system $\mathcal{E}[p]$.
\end{theorem}
Equivalently, choosing a basepoint $u\in U$, non-isotrivial elliptic curves $\mathcal{E}$ over $U$ are classified up to isogeny by the monodromy representation of the fundamental group $\pi_1(U,u)$ on the 2-dimensional $\F_p$ vector space $\mathcal{E}[p]_u$.

We can restate the above theorem in a way that seems more immediately analogous to the usual Frey--Mazur conjecture:
\begin{theorem}\label{main2}With $k, N, M_N$ as above and for any smooth projective curve $B$ of gonality $n<N$, non-isotrivial elliptic curves $E$ over the field $k(B)$ of rational functions on $B$ are classified up to isogeny by their $p$-torsion Galois representation provided $p>M_N$.
\end{theorem}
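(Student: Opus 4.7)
The plan is to deduce Theorem \ref{main2} directly from Theorem \ref{main} (equivalently, Theorem \ref{gonality}) via the standard dictionary between elliptic curves over $k(C)$ and families of elliptic curves over Zariski open subsets of $C$. Since both statements concern only the generic fiber and the gonality of the base, Theorem \ref{main2} is essentially a rephrasing of Theorem \ref{main}.

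First I would let $E,E'$ be two non-isotrivial elliptic curves over $k(C)$ whose $p$-torsion Galois representations are isomorphic. Extending each to an elliptic scheme on its locus of good reduction in $C$ and intersecting, I obtain a common Zariski open $U\subset C$ and elliptic schemes $\mathcal{E},\mathcal{E}'\to U$ with generic fibers $E$ and $E'$. Since gonality is determined by the smooth projective model, $U$ has gonality equal to that of $C$, which is bounded by $B$. Non-isotriviality of $E$ over $k(C)$ is equivalent to non-isotriviality of $\mathcal{E}$ over $U$, and similarly for $E'$.

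Next I would identify the two representations. Because $\mathcal{E}$ has good reduction on $U$, the Galois action on $E[p](\overline{k(C)})$ is unramified at every closed point of $U$, so $\rho_E[p]$ factors through the quotient $\mathrm{Gal}(\overline{k(C)}/k(C))\twoheadrightarrow \pi_1^{\text{\'et}}(U,\bar u)$ classifying étale covers of $U$; the factored representation is precisely the monodromy representation $\rho_{\mathcal{E}}[p]$ attached to the étale group scheme $\mathcal{E}[p]$. Hence $\rho_E[p]\cong \rho_{E'}[p]$ as Galois representations is equivalent to $\rho_{\mathcal{E}}[p]\cong \rho_{\mathcal{E}'}[p]$ as monodromy representations. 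Applying Theorem \ref{main} with the given bound $B$ and the constant $C_B$ it produces, I conclude that $\mathcal{E}$ and $\mathcal{E}'$ are isogenous over $U$ whenever $p>C_B$; restricting to the generic point turns this into an isogeny $E\to E'$ over $k(C)$.

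There is no serious obstacle: the reduction is purely formal, and the only point worth checking carefully is the factorization of the Galois representation through $\pi_1^{\text{\'et}}(U)$ in the presence of good reduction, which is standard. Everything works over an arbitrary algebraically closed field $k$ of characteristic $0$ because Theorem \ref{main} is stated in that generality.
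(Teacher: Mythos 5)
Your proposal is correct and matches the paper's (implicit) argument: the paper presents Theorem \ref{main2} as a restatement of Theorem \ref{main}, with both following immediately from Theorem \ref{gonality}, and the reduction you spell out --- spreading $E,E'$ out to elliptic schemes over a common open $U\subset C$ of the same gonality and identifying the $p$-torsion Galois representation with the monodromy representation via the factorization through $\pi_1^{\mathrm{\acute{e}t}}(U)$ on the locus of good reduction --- is exactly the standard dictionary the paper leaves unstated. Nothing further is needed.
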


Note that since the gonality of modular curves gets large \cite{Abramovich,zograf}, for large enough $p$ the Galois representations are all surjective onto $\SL_2(\F_p)$\footnote{As we are working over complex curves, the Weil pairing is invariant under the monodromy action, so the representation lies in $\SL_2$ rather than $\GL_2$.} and hence geometrically irreducible, so we don't have to worry about semi-simplifying the representations.

Theorems \ref{main} and \ref{main2} follow from the geometric result:
\begin{theorem}[see Theorem \ref{gonality}]\label{geometric} For $k,N,M_N$ as above, every curve $B\subset Z(p)$ of gonality $n<N$ is a Hecke divisor provided $p>M_N$.  
\end{theorem}
Theorem \ref{geometric} proves a conjecture of Kani and Schanz \cite{kani} (and a related conjecture of Hermann \cite{hermann}) on the nonexistence of non-Hecke rational and elliptic curves in $Z(p)$ for large $p$ (\emph{cf.} Corollaries \ref{rational} and \ref{minimal}).  For the most part it is easier instead to study curves in the product $X(p)\times X(p)$, where $X(p)$ parameterizes elliptic curves together with an isomorphism $E[p]\cong (\Z/p\Z)^2$.  The surface $Z(p)$ is naturally the quotient of $X(p)\times X(p)$ only remembering the composition $E_1[p]\xrightarrow{\cong}(\Z/p\Z)^2 \xrightarrow{\cong}E_2[p]$.

The main idea of the proof of Theorem \ref{geometric} runs as follows:  given a curve $B$ of gonality $n<N$ in $Z(p)$, we first get a genus $0$ curve $\P^1\into \Sym^nZ(p)$ using the degree $n$ map $B\into \P^1$.  We then lift $\P^1$ to a curve $C\into(X(p)\times X(p))^n$ and estimate its genus using Riemann--Hurwitz in two different ways. On the one hand, we have a lower bound by the degree with respect to the canonical class $K_{(X(p)\times X(p))^n}$.  On the other hand, the ramification of $C\rightarrow \P^1$ is supported on the ramification points of the quotient map $$(X(p)\times X(p))^n\rightarrow\Sym^nZ(p)$$ and we show that the incidence of $C$ along this set is negligible compared to its degree for large $p$. This constitutes the heart of the paper.  We remark that to prove Theorem \ref{main} with a constant only depending on the genus of the base curve, one can bypass the more technically difficult statements involving the multidiagonals in Sections \ref{repulsionsect} through \ref{mult}.  In this case one cannot reduce to genus 0 curves, and therefore must include an additional argument, such as the topological one in \cite{BT}.

Our proof heavily relies on hyperbolic geometry and the fact that $(X(p)\times X(p))^n$ is uniformized by the $2n$-th power $\H^{2n}$ of the upper half-plane.  The strategy is to bound the multiplicity of curves $C$ along geodesic subvarieties of $(X(p)\times X(p))^n$ in terms of their volume in small tubular neighborhoods of those subvarieties.  A classical result of Federer  states that a curve in $\C^n$ passing through the origin must have volume in the ball $B(0,r)$ of radius $r$ at least equal to that of a coordinate axis.  This result was generalized heavily by Hwang and To \cite{hwangto1,hwangto2} to the case of arbitrary symmetric domains and higher-dimensional subvarieties.  For our needs, the theorems of Hwang and To are not quite sufficient, so we prove several analogues of these results, which may be interesting in their own right.

The bounds we obtain on the multiplicities of $C$ along geodesic subvarieties are better for large radius neighborhoods, but in order to bound the multiplicity along many such subvarieties simultaneously it is necessary to understand how these neighborhoods overlap.  We prove that special subvarieties tend to grow farther apart as $p$ gets large, and that these subvarieties only ``clump" together near higher-dimensional special subvarieties.  The proofs of these repulsion results are arithmetic in nature and fundamentally use the fact that the monodromy group of $X(p)$ over $X(1)$ is an algebraic group.

The proof of Theorem \ref{geometric} ultimately only uses the fact that elliptic curves are parametrized by a Shimura variety of dimension 1, and therefore we expect the same methods to prove an analogue of Theorem \ref{main} for abelian varieties parametrized by any Shimura curve---in fact the proof simplifies substantially due to the lack of cusps.  The case of abelian surfaces with quaternionic multiplication was treated in \cite{BT}\footnote{In \cite{BT} the authors only prove the weaker result that the map from isogeny classes to $p$-torsion representations is 2 to 1. This can be rectified by using the stronger repulsion statement found in Proposition \ref{repulsion}.} by the authors.
\mysect{Outline of the paper}\noindent
In Section \ref{modularsect} we recall background on the modular curves $Y(p)$, including the modular interpretation of the compactifications $X(p)$, and introduce the basic structures on $Z(p)$.  Our techniques require a uniformized metric on $(X(p)\times X(p))^n$, and in Section \ref{hyper} we study the uniformized metric on $X(p)$ in terms of the classical metric on $Y(p)$.  Section \ref{repulsionsect} establishes the repulsion of special subvarieties of the product $(X(p)\times X(p))^n$, and in Section \ref{volest} we provide some machinery in the style of Hwang and To estimating the volume of curves in small neighborhoods of these subvarieties.  Section \ref{mult} combines these results to provide estimates of the multiplicities of curves along special subvarieties, and in Section \ref{mainsect} we use this to estimate ramification and prove Theorem \ref{geometric}.

\mysect{Acknowledgements}\noindent  The authors benefited from many useful conversations with Fedor Bogomolov, Johan de Jong, Michael McQuillan, Allison Miller, and Peter Sarnak. The first named author was supported by NSF fellowship DMS-1103982 at the time of the writing of this paper.  Finally, we are greatly indebted to the referee for offering numerous suggestions for improving the clarity of the exposition as well as simplifications to the proofs in Section \ref{volest}.

\mysect{Notation and conventions}\noindent Throughout the paper we use the following notation regarding asymptotic growth:  for functions $f,g$ we write $f\gg g$ if there is a positive constant $C>0$ such that $f-Cg$ is a positive function; likewise for $\ll$.  If $f_t,g_t$ are functions depending on $t$, we write $f_t= O(g_t)$ if there is a positive constant $C>0$ such that $C|g_t|-|f_t|$ is positive for $t$ sufficiently large.  If the same is true for any $C>0$ we write $f_t=o(g_t)$. We also write $f_t=\omega(g_t)$ to mean $g_t=o(f_t)$. For us, the asymptotic parameter $t$ will always be the prime $p$.

Much of the paper will be concerned with the geometry of the hyperbolic plane, and both the upper half-plane model $\H$ and the Poincar\'e disk model $\D$ will prove convenient.  For computations we normalize the metric to have constant sectional curvature $-1$, so explicitly
\[h_\H=\frac{dz\otimes d\bar z}{(\Im z)^2}\indent\mathrm{and}\indent h_\D=4\cdot \frac{dz\otimes d\bar z}{(1-|z|^2)^2} .\]
We denote the associated distance functions by $d_\H$ and $d_\D$.  We also fix the implicit choice of normalization of the Kobayashi metric so that it coincides with the above metrics.  Note that the associated K\"ahler forms $\omega:=-\Im h$ are
\[\omega_\H=\frac{dz\wedge d\bar z}{2|\Im z|^2}\indent\mathrm{and}\indent \omega_\D=2\cdot \frac{dz\wedge d\bar z}{(1-|z|^2)^2}\]
where we canonically identify purely imaginary 2-forms with measures.  We define $d=\partial+\bar\partial$ as the total differential and $d^c=\frac{1}{4\pi}(\bar\partial-\partial)$.

For any hyperbolic curve $X$, we likewise obtain a metric $h_X$ and a form $\omega_X$ by descent along the universal cover.  We endow products $X^n$ with the Kobayashi metric as well, which is explicitly
\[d_{X^n}((x_1,\ldots,x_n),(y_1,\ldots,y_n))=\max_i d_X(x_i,y_i).\]
We also define the form $\omega_{X^n}$ as the sum of the pullbacks of $\omega_X$ along each projection.  Volumes will always be computed with respect to $\omega_{X^n}$.  If $X$ is compact we have
\begin{equation}\chone(K_X)=\frac{1}{2\pi}[\omega_X]\in H^{1,1}(X,\R(1)).\end{equation}  

To avoid any potential confusion involving the normalization as $p\into \infty$ we will phrase our results as often as possible in terms of manifestly normalization-independent quantities.  Thus, we define
\[a(r):=\mbox{area of the hyperbolic disk of radius $r$}\] 
For example, one can compute using the above normalization that
\[d_\D(0,z)=2\cdot \tanh^{-1}|z|\]
and therefore that
\[a(r)=4\pi\cdot \sinh^2(r/2).\]
We will also define, for any curve $C\subset X^n$,
\[\Deg (C):=K_{X^n}\cdot C\]
so that in the above normalization
\[\Deg (C)=\frac{1}{2\pi}\vol(C).\]
\section{Modular curves}
\label{modularsect}
\mysect{Basics on modular curves}
\noindent
For a prime number $p>3$ we let $Y(p)$ denote the coarse moduli scheme representing pairs $$(E,\phi:(\Z/p\Z)^2\xrightarrow{\cong} E[p])$$ of elliptic curves $E$ together with a \emph{projective} isomorphism $\phi$ from $(\Z/p\Z)^2$ to the $p$-torsion of $E$---that is, an isomorphism $\phi:(\Z/p\Z)^2\into E[p]$ defined up to scaling. We let $X(p)$ denote the standard smooth compactification of $Y(p)$; the added points $X(p)-Y(p)$ are referred to as cusps. $X(p)$ has 2 connected components, determined by the square class of the Weil pairing of $\langle\phi(e_1),\phi(e_2)\rangle$. We let $X(p)_{\epsilon}$ denote the corresponding connected component, where $\epsilon\in\F_p^{\times}/(\F_p^{\times})^2$. We shall consider these schemes exclusively over $\C$.

We recall that $\SL_2(\R)$ has a natural action on the upper half-plane $\H$ given by $$\left(\begin{smallmatrix} a & b\\ c & d\end{smallmatrix}\right)\cdot z = \frac{az+b}{cz+d}.$$

Letting $\Gamma(p):=\{\left(\begin{smallmatrix} a & b\\ c & d\end{smallmatrix}\right)\in\SL_2(\Z)\mid \left(\begin{smallmatrix} a & b\\ c & d\end{smallmatrix}\right)\equiv \left(\begin{smallmatrix} 1 & 0\\ 0 & 1\end{smallmatrix}\right) \mod p\}$, there is a natural isomorphism
$Y(p)_1\cong \Gamma(p)\backslash\H$ and for any $\epsilon\in\F_p^{\times}$ there exist (noncanonical) isomorphisms $Y(p)_1\cong Y(p)_{\epsilon}$. There is also a natural action of $\PGL_2(\F_p)$ on $Y(p)$ which permutes the two components, given by $$g(E,\phi):= (E,\phi\circ\tilde{g}^{-1})$$ where $\tilde{g}$ is any lift of $g$ to $\GL_2(\F_p)$.  The stabilizer of $Y(p)_\epsilon$ under this action is $\PSL_2(\F_p)$.
  Likewise there is an action of $\PGL_2(\F_p)$ on $X(p)$, and the quotient is canonically the compactified modular curve $X(1)$; call the quotient map $\pi: X(p)\into X(1)$. The ramification occurs at the cusps $X(p)- Y(p)$ and at the pre-images under $\pi$ of the points in $q_2,q_3\in X(1)$ representing the elliptic curves with CM by $\Z[i]$ and $\Z[e^{2\pi i/3}]$ respectively. Each of these three sets forms a single orbit under $\PGL_2(\F_p)$, and the ramification order at a point in the orbit is $p,2,$ and $3$ respectively.  

We remark that there is a natural anti-holomorphic involution on $Y(1)$ given by negating the complex structure of the elliptic curve, and this induces an involution on $Y(p)$ and $X(p)$ as well. We denote this by $z\rightarrow\bar{z}$.

\mysect{Modular interpretation of $n$-gons}
$X(1)$ has several interpretations as the coarse space of a moduli problem compactifying that of $Y(1)$; usually this is done by considering the cusp point as the pointed nodal cubic or ``1-gon," but when considering elliptic curves with $n$-torsion it is more naturally thought of as the $n$-gon.

\begin{definition*}  Let $C$ be the nodal cubic and let $C_n$ be the unique connected degree $n$ \'etale cover of $C$.  Geometrically $C_n$ is a cyclic chain of $\P^1$s, obtained from $\Z/n\Z\times \P^1$ by gluing $(k,\infty)$ to $(k+1,0)$ at a node.  An $n$-gon is $C_n$ together with a group structure on the smooth locus $C_n^{sm}$ such that the action of $C_n^{sm}$ on $C_n^{sm}$ extends to all of $C_n$.  This latter requirement implies that $C_n^{sm}$---which is $n$ copies of $C^{sm}$---is noncanonically $\Z/n\Z\times \G_m$ as a group scheme, where $\Z/n\Z$ acts by rotating the cycle of rational curves.
\end{definition*}
Note that the automorphism group of the $n$-gon is noncanonically $\Z/n\Z\ltimes\Z/2\Z$, the $\Z/2\Z$ coming from inversion on the smooth locus.  Indeed, choosing a smooth point $x\in C_n$ of order $n$ not in the identity component $(C_n^{sm})^0$ yields a group isomorphism
\[(C_n^{sm})^0\times \Z/n\Z \xrightarrow{\cong}C_n^{sm}: (t,m)\mapsto tx^m\]
and each map $(t,m)\mapsto (\zeta^m t,m)$ is a group automorphism of $(C_n^{sm})^0\times \Z/n\Z$ for any $n$-th root of unity $\zeta$, yielding the $\Z/n\Z$ of automorphisms.

The notion of $n$-gon is useful because one can be endowed with full level $n$ structure.  The $n$-torsion $C_n^{sm}[n]$ is canonically Cartier self-dual, and thus there is a natural Weil pairing.  We therefore define a level $n$ structure of the $n$-gon $C_n$ to be an isomorphism $C_n^{sm}[n]\cong (\Z/n\Z)^2$ up to scale.  For the rest of the paper, for a fixed prime $p$, by a generalized elliptic curve we will mean either an elliptic curve or a $p$-gon, so that we may speak of generalized elliptic curves with full level $p$ structure.

$X(p)$ can now be interpreted (for $p>3$) as the fine moduli space of generalized elliptic curves with full level $p$ structure.  The stack $\mathcal{X}(1)_p$ of generalized elliptic curves (in the above sense) without the level $p$ structure has coarse space $X(1)$ and compactifies the moduli problem $\mathcal{Y}(1)$; it differs from $\mathcal{X}(1)\cong \mathcal{X}(1)_1$ in that the cusp point in the $\mathcal{X}(1)_p$ moduli problem has an order $p$ stabilizer.

\begin{remark}
Throughout the above, we assume an algebraically closed (characteristic 0) base field, and thus blur the distinction between $\mu_n$ and $\Z/n\Z$.  Over a non-closed field, the $n$-torsion of $C_n^{sm}$ is noncanonically a Cartier self-dual extension of $\Z/n\Z$ by $\mu_n$, and the automorphism group of the $n$-gon is an extension of $\Z/2\Z$ by $\mu_n$.
\end{remark}

\mysect{The diagonal quotient surface}
We now introduce our primary object of study.

\begin{definition*} Let $Z(p)$ denote the coarse moduli scheme representing triples $$(E_1,E_2,\psi:E_1[p]\xrightarrow{\cong} E_2[p])$$ consisting of a pair of generalized elliptic curves $E_1,E_2$ together with a \emph{projective} isomorphism between their $p$-torsion.
\end{definition*}

Note that there is a natural morphism $X(p)\times X(p)\rightarrow Z(p)$ given by $$(E_1,\phi_1)\times(E_2,\phi_2)\rightarrow (E_1,E_2,\phi_2\circ\phi_1^{-1})$$ which identifies $Z(p)$ with
the quotient $\PGL_2(\F_p)\backslash X(p)\times X(p)$ by the diagonal action of $\PGL_2(\F_p)$.

The surface $Z(p)$ was introduced by Hermann \cite{hermann} and studied by Kani--Schanz \cite{kani} and Carlton \cite{carlton}.

\mysect{(anti-)Heegner CM points and singular bicusps}
\noindent

Note that $(x,y)\in X(p)\times X(p)$ is a ramification point of the quotient map $X(p)\times X(p)\into Z(p)$ exactly if $x$ and $y$ share a common stabilizer in $\PGL_2(\F_p)$. Thus, either $x,y$ are both cusps or both in $Y(p)$.

Suppose that $x,y\in Y(p)$ have a common stabilizer $g\in \PGL_2(\F_p)-\textbf{1}$, so that they both map to either $q_2$ or $q_3$ in $X(1)$. Now, since $g$ stabilizes $x,y$ there must
exist automorphisms $h_x,h_y$ of $E_x,E_y$ respectively and lifts $g_x,g_y$ of $g$ to $\GL_2(\F_p)$ such that $h_x\circ\phi_x = \phi_x\circ g_x^{-1}$ and $h_y\circ\phi_y=\phi_y\circ g_y^{-1}$. It is clear that neither of $h_x,h_y$ are $\pm1$.  By possibly negating $h_x,g_x$ we can ensure that $h_x,h_y$ have the same characteristic polynomial, either $t^2+1,t^2-t+1$ or $t^2+t+1$.
It follows that we can take $g_x=g_y$.

\begin{definition*}
Under the above setup, we say that $(x,y)\in Y(p)\times Y(p)$ is a {\it Heegner CM point} if the eigenvalues of $h_x$ acting on the tangent space $T_0E_x$ and $h_y$ acting on $T_0E_y$ are the same, and an {\it anti-Heegner CM point} otherwise, in which case they are complex-conjugates. We denote these sets by $\CM^+, \CM^-\subset X(p)\times X(p)$, respectively, and define $\CM=\CM^+\cup\CM^-$.
\end{definition*}

It is easy to see that $\PGL_2(\F_p)$ preserves each of the sets $\CM^+$ and $\CM^-$.  Note that $(x,y)$ is a Heegner CM point if and only if $(x,\bar y)$ is an anti-Heegner CM point.

In the terminology of Kani and Schanz \cite{kani}, a \emph{Heegner CM point} of $Z(p)$ is a point of the form $(E,E,\psi|_{E[p]})$ for an elliptic curve $E$ with $\Aut(E)\neq\pm \id$ and $\psi\in\End(E)$ of degree coprime to $p$, whereas an \emph{anti-Heegner CM point} is one of the form $( E, \bar E,\tau\circ\psi|_{E[p]})$ for such $E$ and $\psi$, where $\tau: E\into \bar E$ is complex conjugation.  Thus the (anti-)Heegner CM points of $X(p)\times X(p)$ lie over (anti-)Heegner CM points $(E,E,\phi)$ of $Z(p)$.

The only other ramification points of the map $X(p)\times X(p)\into Z(p)$ are those whose coordinates are cusps with a common stabilizer.  We refer to these as \emph{bicusps}, and make the

\begin{definition*} A point $(x,y)\in X(p)\times X(p)$ is called a {\it singular bicusp} if $x,y$ are both cusps and they share a stabilizer in $\PGL_2(\F_p)$. We denote the set of them by $\CUSP\subset X(p)\times X(p)$.
\end{definition*}
\mysect{Hecke operators}\label{hecke}

Recall that if we have an integer $n$ relatively prime to $p$, we can define a Hecke correspondence $T_n\subset X(p)\times X(p)$ between $X(p)$ and itself as the closure of:
$$\{\left((E_1,\phi_1),(E_2,\phi_2)\right)\mid \exists\textrm{ cyclic isogeny }\psi:E_1\rightarrow E_2, \deg\psi=n, \psi\circ\phi_1=\phi_2\}.$$

The modular interpretation can be extended to $p$-gons with full level-structure as follows. First, consider the map  $\phi_n:C_{pn}\rightarrow C_p$ which on the smooth part is just
$$\phi_n(x,a) = (x^n,a), \indent(x,a)\in\G_m\times \Z/n\Z.$$
This induces an isomorphism on $p$-torsion, which we denote by $\phi_n[p]$.  Next, consider the set $G_n$ of all subgroups $G\subset C_{pn}^{sm}$ which are cyclic of order $n$ and whose intersection with the identity component is a single point. For each such $G\in G_n$ we get a map $\psi_G:C_{pn}^{sm}\rightarrow C_p^{sm}$ by quotienting out by $G$, which completes to a map from $C_{pn}$ to $C_p$. Finally, consider the map $\xi_d:C_p\rightarrow C_p$ which is $x\rightarrow x^d$ on the smooth part of the identity component, so that
$$\xi_d(x,a)=(x^d,a), (x,a)\in\G_m\times\Z/n\Z.$$

Now, to a point $(C_p,f:(\Z/p\Z)^2\cong C_p^{sm}[p])$ the $n$th Hecke operator associates the set $$\bigcup_{m|n}\bigcup_{G\in G_m}(C_p, \xi_{\frac{n}{m}}\circ\psi_G\circ\phi_m[p]^{-1}\circ f).$$

It will be important for us that $T_n$ is diagonally $\PGL_2(\F_p)$-invariant.  The two projection maps $\alpha,\beta:T_n\into X(p)$ both have degree $\deg(T_n)=\sigma_1(n)=\sum_{d|n}d$, and we denote by $\mu,\nu:X(p)\times T_n\into X(p)\times X(p)$ the maps $\mu=\id\times\alpha$ and $\nu=\id\times\beta$.  $\mu,\nu$ yield a correspondence from $X(p)\times X(p)$ to itself, and we denote by $T_m^*=\mu_*\nu^*$ the pullback of divisors along this correspondence.
\section{Hyperbolic properties of $X(p)$}\label{hyper}The modular curve $Y(p)$ carries its natural uniformized metric $h_{Y(p)}$ of constant sectional curvature $-1$.  The compactification $X(p)$ is a hyperbolic curve (for $p>5$) in and of itself, and thus also carries a uniformized metric $h_{X(p)}$.    Though the classical metric $h_{Y(p)}$ is well-understood, it's singularities at the cusps make it unamenable to the techniques of Hwang and To.  The purpose of this section is to study the properties of $h_{X(p)}$ by comparing it with $h_{Y(p)}$.

The orbifold coarse space of the stack $\mathcal{X}(1)_p$ of generalized elliptic curves is topologically a sphere with a point $q_2$ of order 2, and point $q_3$ of order 3, and a point $q_p$ of order $p$.  Let $X(1)_p$ be the (orbifold) Riemann surface associated to the tiling of the upper half-plane by a $(2,3,p)$ triangle in the same way that $Y(1)$ is associated to the tiling by the $(2,3,\infty)$ triangle (see for example \cite{beardon} for more on triangle groups).  Let
\[\Gamma(2,3,p)=\langle\sigma_2,\sigma_3,\sigma_p|\sigma_2^2=\sigma_3^3=\sigma_p^p=\sigma_2\sigma_3\sigma_p=1\rangle\cong\pi_1(X(1)_p)\]
be the $(2,3,p)$ triangle group.  The unique biholomorphism from the usual fundamental domain of $Y(1)$ to a $(2,3,p)$ triangle yields by Schwarz reflection the holomorphic embedding $i_p:Y(1)\into X(1)_p$, by which we identify $X(1)_p$ with the orbifold coarse space of $\mathcal{X}(1)_p$.  We let
$$\gamma_p:=i_{p*}:\pi_1(Y(1))\into\pi_1(X(1)_p).$$
After the usual identification $\pi_1(Y(1))=\PSL_2\Z\cong \Gamma(2,3,\infty)$, $\gamma_p$ is the obvious map $\Gamma(2,3,\infty)\into \Gamma(2,3,p)$.  For each connected component $X(p)_\epsilon$, the forgetful map $X(p)_\epsilon\into X(1)_p$ (of coarse spaces) is identified with the \'etale cover associated to the image $\Xi(p)$ of $\Gamma(p)$ under $\gamma_p$, and the metric $h_{X(p)}$ on $X(p)$ is the pullback of the uniformized metric on $X(1)_p$ coming from the above tiling (on each component). We define
$$G_0(p):=\Gamma(2,3,p)/\Xi(p)\cong \PSL_2(\F_p)$$
to be the Galois group of $X(p)_1$ over $X(1)_p$, and $G(p)\cong \PGL_2(\F_p)$ the canonical $\Z/2\Z$ extension acting on $X(p)$ such that the quotient is $X(1)_p$.
\mysect{Injectivity radii}
Recall that for a compact Riemann surface $X$ endowed with its metric $h_X$, the injectivity radius $\rho_{X}(x)$ at a point $x\in X$ is the largest radius for which the exponential map at $x$ is a diffeomeorphism.  It is equal to half the length of the smallest closed geodesic through $x$.  The injectivity radius $\rho_{X}$ is the infimum of $\rho_{X}(x)$ over all $x\in X$, or equivalently half the length of the shortest closed geodesic in $X$.  If we now allow $X$ to have cusps, we define $\rho_{X}$ to be the infimum of the lengths of closed geodesics with respect to $h_X$ that are homotopically nontrivial in the smooth compactification $X'$.

It was first observed by Buser--Sarnak in \cite{BuSa} that the injectivity radius of $Y(p)$ with respect to the metric $h_{Y(p)}$ grows. For the convenience of the reader, we recall the proof:

\begin{lemma}\label{injrad}$\rho_{Y(p)}= 2\log p+O(1)$.
\end{lemma}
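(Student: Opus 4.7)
The plan is to reduce the statement to a purely algebraic estimate on traces of hyperbolic elements of $\Gamma(p)$. Recall that closed geodesics in $Y(p) = \Gamma(p)\backslash\H$ correspond bijectively to $\Gamma(p)$-conjugacy classes of hyperbolic $\gamma\in\Gamma(p)$, and that the hyperbolic length of the geodesic associated to $\gamma$ is
\[\ell_\gamma = 2\arcosh(|\tr\gamma|/2) = 2\log|\tr\gamma| + O(|\tr\gamma|^{-2}).\]
Therefore it suffices to show that the minimum of $|\tr\gamma|$ over hyperbolic $\gamma\in\Gamma(p)$ is of order $p^2$, up to a bounded multiplicative constant.

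For the lower bound I would use a mod-$p^2$ congruence on traces. Writing any $\gamma\in\Gamma(p)$ as $\gamma = I + pM$ for some $2\times 2$ integer matrix $M$, the condition $\det\gamma = 1$ expands as $1 + p\tr M + p^2\det M = 1$, forcing $\tr M = -p\det M$ and hence
\[\tr\gamma = 2 + p\tr M = 2 - p^2\det M \equiv 2 \pmod{p^2}.\]
A hyperbolic $\gamma$ satisfies $\tr\gamma\neq 2$, so integrality forces $|\tr\gamma - 2|\geq p^2$ and hence $|\tr\gamma|\geq p^2 - 2$. This immediately yields $\ell_\gamma \geq 4\log p + O(1)$ for every hyperbolic $\gamma\in\Gamma(p)$, so $\rho_{Y(p)} \geq 2\log p + O(1)$.

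For the matching upper bound I would exhibit an explicit hyperbolic $\gamma\in\Gamma(p)$ with $|\tr\gamma| = p^2 + O(1)$ whose associated geodesic is homotopically nontrivial in the compactification $X(p)$. A natural first candidate is the product of two parabolic generators of cusp stabilizers,
\[\gamma_0 = \begin{pmatrix} 1 & p \\ 0 & 1\end{pmatrix}\begin{pmatrix} 1 & 0 \\ p & 1\end{pmatrix} = \begin{pmatrix} 1+p^2 & p \\ p & 1\end{pmatrix}\in\Gamma(p),\]
which has $\tr\gamma_0 = p^2+2$ and so $\ell_{\gamma_0} = 4\log p + O(1)$. The main technical point, and the only real obstacle, is that $\gamma_0$ itself lies in the subgroup of $\pi_1(Y(p))$ generated by cusp stabilizers and is therefore killed in $\pi_1(X(p))$; for $p$ large enough that $X(p)$ has positive genus, the kernel of $\pi_1(Y(p))\to\pi_1(X(p))$ is a proper normal subgroup, and one can replace $\gamma_0$ by a hyperbolic element of trace $p^2 + O(1)$ outside this kernel, either by an explicit modification using a handle generator of the surface group $\pi_1(X(p))$, or by a counting argument showing that not all of the infinitely many $\gamma\in\Gamma(p)$ with $\tr\gamma = p^2 + 2$ can lie inside a single coset of the kernel. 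Combining the two bounds gives $\rho_{Y(p)} = 2\log p + O(1)$.
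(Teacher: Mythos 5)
Your argument follows essentially the same route as the paper: the lower bound comes from the congruence $\tr\gamma\equiv 2\pmod{p^2}$ for $\gamma\in\Gamma(p)$ together with the relation between translation length and trace, and the upper bound from exhibiting an explicit element of trace $p^2+O(1)$. Your lower bound is complete and matches the paper's (the paper asserts the congruence; you derive it by writing $\gamma=I+pM$ and expanding the determinant, which is fine).

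On the upper bound you are in fact more careful than the paper, and the issue you flag is real but you do not close it. The paper simply writes down a near-extremal matrix (which, modulo what appears to be a typo, is $\left(\begin{smallmatrix}1&p\\0&1\end{smallmatrix}\right)\left(\begin{smallmatrix}1&0\\-p&1\end{smallmatrix}\right)$, a product of two parabolics just like your $\gamma_0$) and does not verify that the corresponding geodesic survives in $\pi_1(X(p))$, even though the definition of $\rho_{Y(p)}$ for cusped surfaces requires exactly that. So the gap you identify is shared by the paper's own proof. Of your two proposed fixes, be aware that the ``counting'' version as stated is too weak: the elements of a fixed trace form a union of conjugacy classes and the kernel is a normal subgroup, so there is no single-coset obstruction to all of them lying in the kernel; you would need an actual argument (e.g.\ producing a hyperbolic element of trace $O(p^2)$ whose image in $H_1(X(p),\Z/\ell)$ or in $\Xi(p)$ is visibly nontrivial). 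Since the downstream uses of this lemma (Corollary \ref{biginj} aside) only need the lower bound, your proposal is adequate for the role the lemma plays, but as a proof of the stated equality it leaves the same loose end the paper does.
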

\begin{proof}

The kernel of $\gamma_p$ is the group generated by the unipotents in $\Gamma(p)$. Thus, a homotopically nontrivial closed geodesic through $x\in Y(p)$ lifts to the unique geodesic arc between two lifts $z,\gamma z\in\H$ for some semisimple $\gamma\in \Gamma(p)$, so that $d(z,\gamma z)=d_\H(Az,aAz)$, where $A\in \SL_2\R$ is the diagonalizing matrix and $\sqrt a+\frac{1}{\sqrt a}=|\tr\gamma|$. In particular, using the formula for distance in the upper half-plane, this means
\begin{align*}
\min_{z}d_\H(z,\gamma z)&=\min_z d_\H(z,az)\notag\\
&=\min_z 2\tanh^{-1}\left|\frac{z-az}{z-a\bar z}\right|\notag\\
&=2\tanh^{-1}\left(\frac{a-1}{a+1}\right)\\
&=\log a\\
&\geq 2\log |\tr\gamma|+O(1).
\end{align*}
Note that for any $\gamma\in \Gamma(p)$, $\tr(\gamma) \equiv 2\mod{p^2}$.  Furthermore, the only semisimple element $\gamma$ with trace 2 is the identity, and thus the minimal value of $|\tr \gamma|$ over all semisimple $\textbf{1}\neq \gamma\in \Gamma(p)$ is $p^2-2$. Moreover, this bound can be achieved by taking $\gamma=\left(\begin{smallmatrix}1-p^2&p\\-p&1\end{smallmatrix}\right)$.  The result then follows.
\end{proof}

\mysect{Comparison of $h_{Y(p)}$ and $h_{X(p)}$}

This subsection will show that the metrics $h_{X(p)}$ and $h_{Y(p)}$ can be compared away from the cusps, and that the metric $h_{X(p)}$ on $X(p)$ also has growing injectivity radius.  After writing this paper, the authors were informed by Peter Sarnak that a comparison of these metrics by ``softer" curvature methods can be found in Brooks \cite{brooks}.  Much of this subsection can almost certainly be rederived using his methods.  We prefer the explicit analysis below, but caution the reader that the dependence on ``hard" results like the uniformization theorem is by choice rather than necessity.

Let $T_{2,3,p}\subset \H$ denote the $(2,3,p)$ triangle with vertices at $i, iy_p, e^{i\theta_p}$, with a right angle at $i$, $0<\theta_p<\pi/2$, and an angle of $\frac{\pi}{3}$ at $e^{i\theta_p}$. We define $\Delta_{2,3,p}$ to be the fundamental domain (see Figure \ref{fig:fundomain}) of the corresponding action of $\Gamma(2,3,p)$ on $\H$ given by the union of $T_{2,3,p}$ with its reflection through the imaginary axis.  Likewise, $\Delta_{2,3,\infty}$ is the usual fundamental domain for $Y(1)$. By the second hyperbolic law of cosines, we compute $\cos(\pi/3)=\sin(\pi/p)\cosh(\log y_p)$ from which we conclude that $y_p=p/\pi+O(p^{-1})$. Similarly, $\theta_p=\pi/3 +O(p^{-1})$.

 \begin{figure}[htbp]
\begin{center}

\input{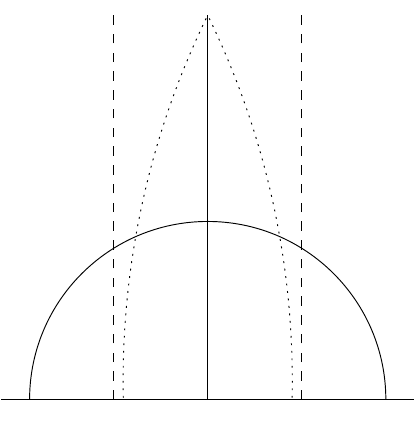_t}
\caption{ \small Fundamental domains of $Y(1)$ and $X(1)_p$.  $\Delta_{2,3,\infty}$ is bordered by the arc of the unit circle together with the two dashed lines, while $\Delta_{2,3,p}$ is bordered by the arc of the unit circle together with the two dotted lines.}
\label{fig:fundomain}
\end{center}
\end{figure}

  Henceforth we we will implicitly think of $\Gamma(2,3,p)$ as embedded in $\PSL_2\R$ via the tiling by $T_{2,3,p}$ and its reflection.  $X(p)$ is then tiled by the set $\Sigma$ of the images of these triangles, so that $G(p)$ acts on $\Sigma$ with 2 orbits. It is easy to see (from Figure \ref{fig:fundomain}, for instance) the

\begin{lemma}\label{disksep}
For any cusp $c\in X(p)$, the disk of radius $\log p-O(1)$ centered at $c$ intersects only those triangles in $\Sigma$ with $c$ as a vertex.
As a consequence, if $c,c'$ are two cusps, then $d_{X(p)}(c,c')>2\log p-O(1)$.
\end{lemma}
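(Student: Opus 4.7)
The plan is to lift everything to the universal cover $\H$ and exploit the dihedral symmetry at a cusp preimage. Fix a cusp $c$ of $X(p)$ and a lift $\tilde c \in \H$; after applying an element of $\Gamma(2,3,p)$ I may assume $\tilde c = iy_p$, the angle-$\pi/p$ vertex of the fundamental triangle $T_{2,3,p}$. The tiles of $\Sigma$ having $c$ as a vertex then lift to the polygon $P \subset \H$ formed as the union of all triangles of the $(2,3,p)$ tiling having $\tilde c$ as a vertex. To establish the first claim it suffices to show that the open hyperbolic disk of radius $\log p - 1$ about $\tilde c$ is contained in $P$.

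The key reduction is by symmetry. The stabilizer of $\tilde c$ in the full (orientation-reversing) isometry group preserving the tiling is dihedral of order $2p$, and it acts transitively on the $2p$ triangles meeting $\tilde c$; in particular it permutes their ``far'' edges (those opposite $\tilde c$) transitively. Since $P$ is bounded precisely by these far edges, the inscribed hyperbolic disk of $P$ centered at $\tilde c$ has radius $\rho = d_\H(\tilde c, e)$ for any one such far edge $e$. Taking the far edge of $T_{2,3,p}$ itself, namely the segment $[i, e^{i\theta_p}]$ lying on the unit circle, I use the fact that the unit circle is a geodesic of $\H$ perpendicular to the imaginary axis at $i$. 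Because $\tilde c = iy_p$ also lies on the imaginary axis, the imaginary axis itself is the perpendicular geodesic from $\tilde c$ to the unit circle, with foot at $i$. Hence $\rho = d_\H(iy_p, i) = \log y_p$. Combined with the earlier estimate $y_p = p + O(1/p)$, this gives $\rho = \log p + O(1/p^2) > \log p - 1$ for all $p$ sufficiently large, proving the first claim.

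The consequence for two distinct cusps is then immediate. Every triangle of the tiling has a unique vertex of angle $\pi/p$, so no triangle contains both $c$ and $c'$ as vertices. Hence the unions of tiles of $\Sigma$ having $c$, resp.~$c'$, as a vertex are disjoint subsets of $X(p)$, and so are the open disks of radius $\log p - 1$ about $c$ and $c'$ (or indeed of the larger radius $\log y_p$). This yields $d_{X(p)}(c, c') \geq 2\log y_p > 2\log p - 2$.

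There is no serious obstacle: the whole argument rests on the clean geometric observation that the perpendicular from $iy_p$ to $[i, e^{i\theta_p}]$ is just the imaginary axis, making the distance computation trivial and entirely sidestepping the hyperbolic trigonometry of the triangle $T_{2,3,p}$.
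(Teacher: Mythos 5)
Your proof is correct and is exactly the argument the paper has in mind: the paper offers no proof beyond an appeal to Figure~1, and your computation that the far edge of each tile at $iy_p$ lies on the unit circle, whose perpendicular distance to $iy_p$ along the imaginary axis is $\log y_p = \log p + O(1/p^2)$, is a faithful formalization of that picture. The reduction by the order-$2p$ dihedral symmetry at the cusp and the disjoint-star argument for the second claim are both sound.
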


Note that the Kobayashi metrics on $Y(p)$ and $X(p)$ are $h_{Y(p)}$ and $h_{X(p)}$, respectively.  Any holomorphic map is distance decreasing with respect to the Kobayashi metric, so from the embedding $j_p:Y(p)\into X(p)$ we immediately have $h_{X(p)}|_{Y(p)}\leq h_{Y(p)}$.  In fact, the two metrics are close far from the cusp:

\begin{prop}\label{metcom1} For $x\in X(p)$, let $d_{cusp}(x)=\inf_c d_{X(p)}(x,c)$ be the minimum distance to a cusp.  Then
\[\tanh^2(d_{cusp}/2)h_{Y(p)}\leq h_{X(p)}|_{Y(p)}\leq h_{Y(p)}.\]
\end{prop}
\begin{proof}The right hand inequality was addressed above.  The lift $\tilde{j}_p:\H\into\H$ of the inclusion $j_p$ to the universal covers is invertible at any non-cusp point $z\in\H$ on a hyperbolic disk $B(z,d_{cusp}(x))$ where $x\in X(p)$ is the image of $z$, so again by the distance-decreasing property $h_{Y(p)}$ bounds from below the Kobayashi metric of $B(z,d_{cusp}(x))$.   By scaling the hyperbolic disk $B(0,r)\subset \D$ by a factor of $1/\tanh(r/2)$ we see that the Kobayashi metric of $B(0,r)$ at $0$ is equal to $1/\tanh^2(r/2)$ times the hyperbolic metric, whence the claim.


\end{proof}
We can then conclude that the injectivity radius of $X(p)$ is close to that of $Y(p)$:
\begin{cor}\label{biginj}
The injectivity radius $\rho_{X(p)}=2\log p+O(1)$.
\end{cor}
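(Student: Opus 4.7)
The plan is to establish the two bounds $\rho_{X(p)} \le 2\log p + O(1)$ and $\rho_{X(p)} \ge 2\log p + O(1)$ separately, both by combining Lemma~\ref{injrad} with Corollary~\ref{metcom1}.

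For the upper bound, Lemma~\ref{injrad} provides a closed $h_{Y(p)}$-geodesic $\gamma_0 \subset Y(p)$ of length $2\rho_{Y(p)} = 4\log p + O(1)$ that is non-trivial in $\pi_1(X(p))$. Viewed as a loop in $X(p)$, the comparison $h_{X(p)} \le h_{Y(p)}$ from Corollary~\ref{metcom1} bounds its $h_{X(p)}$-length by $4\log p + O(1)$, and the $h_{X(p)}$-geodesic in the same free homotopy class is no longer, so $\rho_{X(p)} \le 2\log p + O(1)$.

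For the lower bound, let $\gamma$ be a shortest closed $h_{X(p)}$-geodesic of length $L = 2\rho_{X(p)}$, and fix the threshold $R = \log\log p$. If $\gamma$ stays at distance at least $R$ from every cusp, Corollary~\ref{metcom1} yields $\tanh^{2}(R/2)\, h_{Y(p)} \le h_{X(p)}$ along $\gamma$; integrating and using $\mathrm{len}_{Y(p)}(\gamma) \ge 2\rho_{Y(p)}$ (valid since $\gamma \subset Y(p)$ is non-trivial in $\pi_{1}(X(p))$) yields
\[
L \ge \tanh(R/2)\cdot 2\rho_{Y(p)} = (1 - O(1/\log p))(4\log p + O(1)) = 4\log p + O(1).
\]
If instead $\gamma$ enters $B(c, R)$ for some cusp $c$, Lemma~\ref{disksep} allows $B(c, \log p - 1)$ to embed isometrically in $\H$, so $\gamma \cap B(c, \log p - 1)$ is a union of hyperbolic chords. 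By hyperbolic Pythagoras the chord with closest approach $r < R$ has length
\[
2\,\arcosh\!\left(\frac{\cosh(\log p - 1)}{\cosh r}\right) \ge 2\log p - 2r - O(1) = 2\log p - O(\log\log p),
\]
contributing this directly to $L$. The complementary portion of $\gamma$ is estimated by modifying $\gamma$: replace the part inside $B(c, R)$ with a boundary arc on $\partial B(c, R)$ to obtain $\gamma^{*} \subset Y(p)$ homotopic to $\gamma$ in $X(p)$, and then apply Corollary~\ref{metcom1} together with $\mathrm{len}_{Y(p)}(\gamma^{*}) \ge 2\rho_{Y(p)}$.

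The main obstacle is the cusp-entering case. The boundary-arc replacement on $\partial B(c, R)$ contributes an $h_{Y(p)}$-term of order $\sinh R$, which at $R = \log\log p$ has size $\sim \log p$ and erodes the tightness of the $\rho_{Y(p)}$-bound; performing the replacement at the maximal scale $\log p - 1$ is worse still, the term becoming of size $\sim p$. The resolution is to balance carefully the $2\log p - O(\log\log p)$ contribution from the annular portion of the chord (in $R \le d_{cusp} \le \log p - 1$) against the modification-based bound at scale $R$, and to verify that together they recover the sharp constant $2$ in $\rho_{X(p)} = 2\log p + O(1)$.
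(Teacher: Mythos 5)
Your upper bound and your Case 1 (the geodesic avoids every cusp neighborhood of radius $R=\log\log p$) are fine and agree with the paper. The gap is exactly where you flag it, in the cusp-entering case, and the ``balancing'' you propose cannot close it. The two estimates you want to combine --- the chord length $2\log p-O(\log\log p)$ inside $B(c,\log p-1)$, and the bound on $\ell_{X(p)}(\gamma\setminus B(c,R))$ obtained from $\ell_{Y(p)}(\gamma^{*})\ge 2\rho_{Y(p)}$ --- are lower bounds on \emph{overlapping} portions of $\gamma$: almost all of the chord lies in the annulus $R\le d_{cusp}\le \log p-1$, hence in $\gamma\setminus B(c,R)$. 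So the two bounds cannot be added; you only get their maximum. The surgery bound alone gives $\ell_{X(p)}(\gamma\setminus B(c,R))\ge 2\rho_{Y(p)}-\pi\sinh(R)/\tanh(R/2)-o(\log p)$, roughly $(4-\tfrac{\pi}{2})\log p$ at $R=\log\log p$, and the chord bound alone gives $2\log p-O(\log\log p)$; neither, nor their maximum, reaches the needed $4\log p+O(1)$ for the systole. Re-optimizing $R$ does not help: the uniform comparison $\tanh^{2}(R/2)h_{Y(p)}\le h_{X(p)}$ loses $O(e^{-R}\log p)$ while the boundary arc costs $O(e^{R})$, so any choice of $R$ leaves an error of at least $\Theta(\sqrt{\log p})$, and the stated $O(1)$ error term is unreachable by this route.

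The missing idea is to perform the surgery at \emph{constant} radius (the paper uses radius $1$), so that the replacement arcs cost only $O(1)$, and then to control the discrepancy $\ell_{Y(p)}(\gamma')-\ell_{X(p)}(\gamma')$ not by a single worst-case comparison constant on $\gamma\setminus B(c,R)$ but by integrating the pointwise bound $1/\tanh(d_{cusp}/2)-1=O(e^{-d_{cusp}})$ along the curve. Since $\gamma\cap B(c,\log p-1)$ is a union of geodesic chords of an embedded hyperbolic disk, a chord with closest approach $r\ge 1$ satisfies $\cosh d(s)=\cosh r\cosh s$ in its arclength parameter, so it spends arclength $O(dx)$ at distance $\approx x$ from the cusp and contributes $O\bigl(\int_{1}^{\infty}e^{-x}\,dx\bigr)=O(1)$ to the discrepancy; moreover only boundedly many chords approach any cusp closely, since each deep chord already costs length $\approx 2\log p$ out of a total of $O(\log p)$. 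This yields $\ell_{Y(p)}(\gamma')\le \ell_{X(p)}(\gamma)+O(1)$ and hence the sharp $\rho_{X(p)}=2\log p+O(1)$.
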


\begin{proof}

The upper bound follows by Lemma \ref{injrad} combined with the fact that $h_{X(p)}\leq h_{Y(p)}$. For the lower bound,
suppose $\gamma$ is a minimal length geodesic loop in $X(p)$, of length $\ell_{X(p)}(\gamma)$.  Note that $\gamma$ can be within $(\log p)/2$ of at most one cusp, as $\ell_{X(p)}(\gamma)\leq 2\log p+O(1)$.
Thus, as $\gamma$ is a geodesic, the length of $\gamma$ in the $h_{X(p)}$ metric within a distance $d<(\log p)/2$ of any cusp is at most $2d$. Consider a new loop $\gamma'$ which is equal to $\gamma$, except that the stretch of $\gamma$ between when it first enters the disk of radius $1$ around the cusp and when it last exits that disk is replaced by an arc on   the boundary of the disk. Thus, by Proposition \ref{metcom1}
\begin{align*}
2\rho_{Y(p)}&\leq\ell_{Y(p)}(\gamma')\\
&\leq \ell_{X(p)}(\gamma') + \left(\ell_{Y(p)}(\gamma')-\ell_{X(p)}(\gamma')\right)\\
&\leq O(1) + \ell_{X(p)}(\gamma)+\int_{x=1}^{\infty}2\left(\frac{1}{\tanh(x/2)}-1\right)dx \\
&=2 \rho_{X(p)}+O(1).\\
\end{align*}
The claim follows by Lemma \ref{injrad}.
\end{proof}

The Hecke correspondences $T_m$ are isometric with respect to $h_{Y(p)}$, and we would now like to show that they are approximately isometric with respect to $h_{X(p)}$ in an appropriate sense.  Note that the usual uniformization $\H\into Y(p)_1$ factors through $e^{2\pi iz/p}:\H\into\D^*$, and furthermore that the map $\D^*\into Y(p)_1$ is injective on the image of $\H_{>y}:=\{z\in \H\mid \Im z>y\}$ for $y>1/p$.  Indeed for any $M=\mat{a}{b}{c}{d}\in \Gamma(p)$ with $c\neq 0$,
\[\Im M\cdot z=\frac{\Im z}{|cz+d|^2}\leq \frac{1}{p^2\cdot \Im z}.\]
Thus for $y>1/p$ we have an embedded disk $\D_y(c)\subset X(p)$ of Euclidean radius $e^{-2\pi y/p}$ around each cusp $c$ of $X(p)$, and by the distance decreasing property of the Kobayashi metric, 
\begin{equation}\label{comp1}\D_y(c)\subset B_{X(p)}(c,2\tanh^{-1}(e^{-2\pi y/p})).\end{equation}  On the other hand, by pulling back from $X(1)_p$, we see from the above that
\[B_{X(p)}(c,\log p+O(1))\subset \D_1(c).\]
By Proposition \ref{metcom1}, we can then conclude
\begin{align}
B_{X(p)}(c,R)&\subset \left\{x\mid d_{X(p)}(x,\D_1(c))<R-\log p+O(1)\right\} \notag\\
&\subset \D_{O(pe^{-R})}(c)\label{comp2}\end{align}
provided $\log p+O(1)<R<2\log p-O(1)$.  We are now in a position to prove the 
  \begin{lemma}\label{heckepullback}
  For any $R,m>0$ such that $R>\log p+O(1)$ and $R+\log m<2\log p-O(1)$, and any  cusp $c\in X(p)$, if $T_m^*c = \cup_i c_i$ then $$T_m^*B_{X(p)}(c,R) \subset\bigcup_i B_{X(p)}(c_i,R + \log m + O(1)).$$
  \end{lemma}
  \begin{proof}
Note that $T_m$ on the upper half-plane satisfies $\Im(T_m^*z)\geq \Im(z)/m$, and therefore for each cusp $c\in X(p)$ we have $T^*_m(\D_y(c))\subset \D_{y/m}(c)$ provided $y>m/p$.  The claim then follows from \eqref{comp1} and \eqref{comp2} above.
  \end{proof}

\mysect{Heights}

Fix now the uniformization $\H\into X(1)_p$ coming from the tiling by $\Delta_{2,3,p}$ and denote by $H=\langle \sigma_p\rangle\subset \Gamma(2,3,p)$ the stabilizer of $iy_p$, where $\sigma_p$ is rotation by $\frac{2\pi}{p}$ around $iy_p$.  We also fix a uniformization $\H\into X(p)_1$ associated to a subgroup $\Xi(p)\subset \Gamma(2,3,p)$.  For convenience, we define the usual notion of (big) height on $\SL_2(\Z)$:

\begin{definition*}  For $M\in \SL_2(\Z)$ denote by $h(M)$ the maximum absolute value of its entries.
\end{definition*}
The following two lemmas roughly say that for certain $x\in X(p)_1$, the distance between $x$ and a translate $M\cdot x$ is controlled by the height of $M$.
\begin{lemma} \label{c2dist}

Fix $\iota\in X(p)_1$ to be the image of $i\in\H$, and take $\delta<1/2$.  If $d_{X(p)}(\iota,\gamma\cdot \iota) < \delta\rho_{X(p)}$ for $\gamma\in G(p)$, then there exists a matrix $M\in \SL_2(\Z)$ with $h(M)=O(p^{4\delta})$ such that
$\gamma=\gamma_p(M)\mod \Xi(p)$.
\end{lemma}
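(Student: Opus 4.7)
The plan is to lift the geodesic from $\iota$ to $\gamma\iota$ through the \'etale map $F_p$ of Section~\ref{hyper}, viewed as a map from the upper half-plane uniformizing $Y(1)$ to the upper half-plane uniformizing $X(1)_p$, obtaining a path from $i$ to $M\cdot i$ on the $Y(1)$ side whose length controls $h(M)$ via the classical identity $\|M\|^2=2\cosh d_\H(i,Mi)$. First, pick a lift $\tilde\gamma\in\Gamma(2,3,p)$ of $\gamma$ with $d_\H(i,\tilde\gamma\cdot i)=R$ on the $X(1)_p$ side, and let $\tilde\alpha$ be the geodesic segment from $i$ to $\tilde\gamma\cdot i$ there.

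The key steps are as follows. (1) The map $F_p$ misses the cusps, i.e.\ the preimages of $c_p$, so $\tilde\alpha$ must be perturbed to avoid them. By Lemma~\ref{disksep} these cusps are pairwise separated by distance $\geq 2\log p-2$, so $\tilde\alpha$ enters the distance-$1$ ball of at most $O(R/\log p)$ of them. Replace the portion of $\tilde\alpha$ inside each such ball by an arc along the distance-$1$ circle, of length at most $2\pi\sinh 1=O(1)$; the resulting path $\tilde\alpha'$ has length $R+O(R/\log p)$ and stays at distance $\geq 1$ from every cusp. (2) Path-lift $\tilde\alpha'$ through $F_p$, starting from $i$ on the $Y(1)$ side (using $F_p(i)=i$), to a path $\beta$ ending at a point $w$. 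Proposition~\ref{metcom0} gives $F_p^* h_{X(1)_p}\geq \tanh^2(1/2)\,h_{Y(1)}$ along $\tilde\alpha'$, so the $Y(1)$-side length of $\beta$ is at most $R(1+o(1))/\tanh(1/2)$. (3) Since $F_p(w)=\tilde\gamma\cdot i$ lies in the $\Gamma(2,3,p)$-orbit of $i$, the point $w$ lies in the $\Gamma(2,3,\infty)=\PSL_2(\Z)$-orbit of $i$ (both are the preimages of the common orbifold point of order $2$), so $w=M\cdot i$ for some $M\in\SL_2(\Z)$. The identity $\|M\|^2=2\cosh d_\H(i,Mi)$ then yields $h(M)\leq\|M\|=O(e^{R/(2\tanh(1/2))})$, which is comfortably inside $O(e^{2R})$. (4) By construction $\gamma_p(M)\cdot i=F_p(M\cdot i)=\tilde\gamma\cdot i$, so $\gamma_p(M)^{-1}\tilde\gamma$ fixes $i$ in the $X(1)_p$ uniformization and hence lies in $\langle\sigma_2\rangle$; post-multiplying $M$ by $\left(\begin{smallmatrix}0&-1\\1&0\end{smallmatrix}\right)\in\SL_2(\Z)$ if necessary forces $\gamma_p(M)=\tilde\gamma$, which then reduces to $\gamma$ in $G(p)$ as required, at no cost to the height estimate.

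The main obstacle is the lifting step (2): Schwarz--Pick only tells us that $F_p$ is distance-decreasing, so without further control, the lift of a short path in the $X(1)_p$ uniformization could be arbitrarily long in the $Y(1)$ uniformization if it came too close to a cusp, where the stretching factor $1/\tanh(d_{cusp}/2)$ blows up. The detour construction in step (1) buys a uniform stretching factor of $1/\tanh(1/2)\approx 2.17$ at the cost of an additive term $O(R/\log p)$ in the $X(1)_p$-length. This tradeoff is viable precisely because the cusps of $X(p)$ repel at the scale of the injectivity radius $\approx 2\log p$ (Corollary~\ref{biginj} via Lemma~\ref{disksep}), keeping the number of detours small; the resulting exponent $1/(2\tanh(1/2))\approx 1.08$ sits comfortably below the $2$ needed for the bound $h(M)=O(e^{2R})$.
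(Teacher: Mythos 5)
Your proof is correct, and it reaches the same endpoint as the paper's --- a height bound for an $M\in\SL_2(\Z)$ with $d_\H(i,Mi)$ controlled by $R$ --- but by a genuinely more careful route. The paper disposes of the transfer from the $h_{X(1)_p}$ metric to the $h_{Y(1)}$ metric with a one-line appeal to Corollary \ref{metcom1} and then bounds the entries of $M$ by hand from the upper half-plane distance formula (getting $a,b\ll e^{2R}$, whence the stated $O(e^{2R})$). Read literally, that appeal only uses $h_{X(p)}\leq h_{Y(p)}$, which gives the inequality between distances in the wrong direction; the honest justification is exactly your detour-and-lift argument: the lower bound $\tanh^2(d_{cusp}/2)h_{Y(1)}\leq h_{X(1)_p}$ is only uniform away from the cusps, and Lemma \ref{disksep} guarantees the geodesic can be rerouted around the $O(R/\log p+1)$ cusp preimages it approaches at bounded additional cost. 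So your step (1)--(2) supplies a quantitative transfer the paper leaves implicit, and your step (3)--(4) correctly identifies the endpoint of the lift with an $\SL_2(\Z)$-translate of $i$ and fixes the stabilizer ambiguity. Your use of $\|M\|^2=2\cosh d_\H(i,Mi)$ in place of the paper's entrywise computation is cleaner and gives a sharper exponent ($\approx 1.1$ versus $2$). Two small bookkeeping points: the length of $\tilde\alpha'$ is $R+O(1+R/\log p)$ rather than $R+O(R/\log p)$, since even a single detour costs an additive constant, and the resulting exponent is only ``comfortably below $2$'' once $p$ (hence the cusp separation $2\log p-2$) is large; both are harmless in the regime where the lemma is applied.
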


\begin{proof}
Let $R=\delta\rho_{X(p)}$.  By Proposition \ref{metcom1} and Corollary \ref{biginj} it suffices to show in $\H$ that if $d_\H(i,M\cdot i) = R$ for $M\in \SL_2(\Z)$ then the elements of $M$ are $O(e^{2R})$.

Let $$M=\begin{pmatrix} a & b\\c  & d\end{pmatrix},$$ so that \begin{equation}M\cdot i = \frac{ai+b}{ci+d} = \frac{(bd+ac) + i}{c^2+d^2}\label{real}.\end{equation} It follows by looking at the imaginary part of $M\cdot i$ that
$$c^2+d^2\leq e^R$$ and thus each of $c,d$ is at most $e^{R/2}$. Using the distance formula on the upper half-plane we have

$$2\Im(M\cdot i)(\cosh(R)  - 1) = (\Re M\cdot i)^2 + (-1+\Im (M\cdot i))^2$$ and thus $$(\Re M\cdot i)^2\leq 2\Im(M\cdot i)e^R$$
which from equation \eqref{real} gives $$\frac{bd+ac}{c^2+d^2}\leq \sqrt{2\Im(M\cdot i)e^R}$$ and $$bd+ac\leq 2e^{\frac{3R}{2}}.$$ Now, using $ad-bc=1$ we get $$a(c^2+d^2)\leq  2ce^{\frac{3R}{2}} + d$$ so that
$$a\leq 2e^{2R}$$ and similarly for $b$.

\end{proof}
Fixing $\varpi\in X(p)_1$ to be the image of $e^{i\theta_p}\in\H$, an analogous result holds for $\gamma\in G(p)$ for which $\gamma\cdot \varpi$ is close to $\varpi$.  A slightly different statement is required for the cusp:
\begin{lemma} \label{cuspdist}

Fix $c_0\in X(p)_1$ to be the image of $iy_p\in\H$ and take $\delta<1/2$.
\begin{enumerate}
\item[(a)] For any cusp $c\in X(p)_1$  and any lift $z\in\H$ of $c$ with $d(iy_p,z)\leq (1+\delta)\rho_{X(p)}$, there exists $M\in\SL_2(\Z)$ with $h(M)=O(p^{4\delta})$ such that $z=\sigma\gamma_p(M)\cdot iy_p$ for some $\sigma\in H$.
\item[(b)]For all $\gamma\in G(p)$, if $d_{X(p)}(c_0,\gamma \cdot c_0) \leq (1+\delta)\rho_{X(p)}$ then there exists $M\in\SL_2 \Z$ with $h(M) = O(p^{4\delta})$ such that
$\gamma\in H\cdot \gamma_p(M)\cdot H\mod \Xi(p)$.
\end{enumerate}

\end{lemma}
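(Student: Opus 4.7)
The strategy will be analogous to that of Lemma \ref{c2dist}, but requires additional care due to the parabolic nature of the cusp at $iy_p$ and because we must exploit $H$-multiplication freedom on both sides. The key preliminary observations are: (i) $\gamma_p:\Gamma(2,3,\infty)\to\Gamma(2,3,p)$ is surjective, since it sends the generators $\sigma_2,\sigma_3,\sigma_\infty$ of $\Gamma(2,3,\infty)$ to the generators $\sigma_2,\sigma_3,\sigma_p$ of $\Gamma(2,3,p)$ in the triangle group presentations; (ii) under the identification $\Gamma(2,3,\infty)=\PSL_2\Z$, one has $\gamma_p(T)=\sigma_p$, where $T=\left(\begin{smallmatrix}1&1\\0&1\end{smallmatrix}\right)$ generates the stabilizer of $\infty$, since a loop around the cusp $\infty$ of $Y(1)$ becomes a generator of the order-$p$ orbifold stabilizer $H$ at $iy_p\in X(1)_p$.

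For (a), my plan is to use surjectivity of $\gamma_p$ to write $z=\gamma_p(M_0)\cdot iy_p$ for some $M_0=\left(\begin{smallmatrix}a&b\\c&d\end{smallmatrix}\right)\in\SL_2\Z$, and then exploit two independent modifications to reduce the height. Replacing $M_0$ by $T^j M_0$ multiplies $\gamma_p(M_0)$ on the left by $\sigma_p^j\in H$, which is absorbed into $\sigma$ in the conclusion; replacing $M_0$ by $M_0 T^k$ multiplies $\gamma_p(M_0)$ on the right by $\sigma_p^k$, which fixes $iy_p$ and so leaves the action unchanged. Since left multiplication sends $(a,b)\mapsto(a+jc,b+jd)$ and right multiplication sends $(b,d)\mapsto(b+ka,d+kc)$, I can arrange $|a|,|d|\leq|c|/2$, after which $ad-bc=1$ forces $|b|\leq|c|/4+|c|^{-1}$. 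A standard calculation yields $\Im(M_0\cdot iy_p)=y_p/(c^2 y_p^2+d^2)$, and the $1$-Lipschitz estimate $d_\H(z_1,z_2)\geq|\log(\Im z_1/\Im z_2)|$ combined with $d_\H(iy_p,z)\leq R:=(2+\delta)\log p$ yields $c^2 y_p^2+d^2\leq e^R=p^{2+\delta}$. Since $y_p=p+O(1/p)$, one concludes $|c|\ll p^{\delta/2}$, and hence $h(M_0)\ll p^{\delta/2}=O(p^{2\delta})$.

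For (b), I would choose a lift $\tilde\gamma\in\Gamma(2,3,p)$ of $\gamma\in G(p)$ realizing the quotient distance, so that $d_\H(iy_p,\tilde\gamma\cdot iy_p)\leq(2+\delta)\log p$. Applying (a) to $z:=\tilde\gamma\cdot iy_p$ produces some $M\in\SL_2\Z$ with $h(M)=O(p^{2\delta})$ and some $\sigma\in H$ satisfying $\sigma\gamma_p(M)\cdot iy_p=\tilde\gamma\cdot iy_p$. Since $\Stab_{\Gamma(2,3,p)}(iy_p)=H$, this forces $\tilde\gamma=\sigma\gamma_p(M)h'$ for some $h'\in H$. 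Reducing modulo $\Xi(p)$, which is torsion-free (because $X(p)$ is smooth for $p>3$) and hence satisfies $H\cap\Xi(p)=\{1\}$, we obtain $\gamma\in H\cdot\gamma_p(M)\cdot H$ inside $G(p)$. The heart of the argument is the height reduction in (a); once one sees that both left and right $T$-multiplication are innocuous, the bound on $|c|$ is immediate from the vertical-projection estimate, and everything else is bookkeeping about the triangle group presentation and the universal cover.
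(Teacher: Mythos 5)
Your reduction of part (b) to part (a), and your use of left/right multiplication by $T$ (mapping to $\sigma_p\in H$) to put $M_0$ into a height-reduced double coset, are both fine; the derivation of (b) from (a) is essentially what the paper does. The gap is in the key step of part (a), where you bound $|c|$. The formula $\Im(M_0\cdot iy_p)=y_p/(c^2y_p^2+d^2)$ describes the \emph{standard} action of the integer matrix $M_0$ on the point $iy_p\in\H$. But the point $z$ in the hypothesis is $\gamma_p(M_0)\cdot iy_p$, where $\gamma_p(M_0)$ is the corresponding element of the triangle group $\Gamma(2,3,p)$ embedded in $\PSL_2(\R)$ via the tiling by $T_{2,3,p}$; this is a different real matrix with non-integral entries, and $\gamma_p$ is only an abstract group homomorphism, not the restriction of a map of $\PSL_2(\R)$. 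So the hypothesis $d(iy_p,z)\leq(2+\delta)\log p$ constrains $d_\H(iy_p,\gamma_p(M_0)\cdot iy_p)$, not $d_\H(iy_p,M_0\cdot iy_p)$, and your vertical-projection estimate bounds the wrong quantity.

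The two actions are intertwined only by the holomorphic map $F_p$, which is distance-decreasing but not an isometry, and the quantitative comparison of Proposition \ref{metcom0} degenerates as $\tanh^2(d_{cusp}/2)\to 0$ precisely on the orbit of $iy_p$ --- exactly the two points you are measuring between. (Indeed, in the $Y(1)$ metric the distance between distinct cusps is infinite, while in the $X(1)_p$ metric it is about $2\log p$, so no uniform two-sided comparison can hold there.) This is why the paper does not argue at the cusps directly: it uses Lemma \ref{disksep} to place the midpoint region between $iy_p$ and $z$ at distance $\geq\log p-O(1)$ from every cusp, finds there a vertex $u=\sigma\cdot i$ projecting to $c_2$, and invokes Lemma \ref{c2dist}, which operates at the point $i$ where the two metrics agree up to bounded factors. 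Your double-coset height reduction could be combined with such a midpoint/horoball analysis (e.g.\ via Lemma \ref{im}) to justify a bound of the shape $|c|\ll p^{\delta/2}$, but as written the crucial inequality $c^2y_p^2+d^2\leq e^R$ does not follow from the stated hypothesis.
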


\begin{proof}
By Lemma \ref{disksep} and Corollary \ref{biginj} we have $d(iy_p,z) > 2\log p -O(1)$, and so $$B(iy_p,(1+2\delta) \log p )\cap B(z, (1+2\delta)\log p)$$ is within a distance of $2\delta\log p$ of the boundary of $B(iy_p,\log p)$ as well as that of $B(z,\log p)$.
Thus, it is within $2\delta\log p +O(1)$ of a point $u\in\H$ which projects to $q_2$ in $X(1)_p$ and is a vertex of one of the $(2,3,p)$ tiles having $iy_p$ as a vertex, so that
$u=\sigma\cdot i$ for some $\sigma\in\langle \sigma_p\rangle$,
 It follows by Lemma \ref{c2dist} that there is a matrix $M\in \SL_2(\Z)$ with $h(M)= O(p^{4\delta})$ such that
$\gamma_p(M)\cdot i = \sigma^{-1}\cdot u$, and by possibly pre-composing $\gamma_p(M)$ with $\sigma_2$ we get that $\sigma\gamma_p(M) \cdot iy_p = z$, thus proving (a).

Part (b) easily follows from part (a) after choosing a lift $z\in \H$ of $\gamma\cdot c_0$ with $d(c_0,\gamma\cdot c_0)=d(iy_p,z)$.

\end{proof}\section{Repulsion Results}
\label{repulsionsect}
The volume estimates from Section 5 will allow us to bound the multiplicity of curves $C$ in $X(p)\times X(p)$ along special points in terms of their volume near those points.  As $p$ grows, the points tend to spread out further, and we can find neighborhoods of large radius around them that tend to be disjoint.  The total volume, and therefore the total multiplicity, can therefore be bounded by the total volume of the curve.  This argument fails when such neighborhoods overlap many times, but luckily such overlaps only occur close to higher-dimensional special subvarieties, which themselves repel one another.

Throughout this section, we solely consider the metric $h_{X(p)}$ from Section \ref{hyper} on $X(p)$, and suppress its mention from the notation.  Thus, the distance $d_{X(p)}(x,x')$ between $x,x'\in X(p)$ with respect to $h_{X(p)}$ will be denoted $d(x,x')$, and the ball around $x$ by $B(x,R)$.  We use the same notation for distance and balls in $\H$ as in $X(p)$, and rely on context to distinguish between the two.  For the most part we state our results in a normalization-independent way in terms of the injectivity radius $\rho_{X(p)}$.

A final advisory:  we use the phrase ``for all sufficiently small $\delta>0$ and all sufficiently large $p$" to mean that for each sufficiently small $\delta>0$ there is a constant $F_\delta>0$ such that the statement holds for $p>F_\delta$.

\mysect{Repulsion of cusps}

\begin{prop}\label{repulsioncusp}

For all sufficiently small $\delta>0$ and all sufficiently large $p$:

\begin{enumerate}
\item[(a)] For any distinct cusps $c_0,c\in X(p)$, and any pre-image $z_0\in\H$ of $c_0$, there is at most one pre-image $z\in\H$ of $c$ such that $d(z_0,z)\leq (1+\delta)\rho_{X(p)}$.

\item[(b)] For any distinct cusps $c_0,c\in X(p)$, $$B(c_0,(1/2+\delta)\rho_{X(p)} )\cap B(c, (1/2+\delta)\rho_{X(p)})$$ is contained in a ball of radius $\delta\rho_{X(p)} + O(1)$.

\item[(c)] For any $x\in X(p)$ there are at most $O(p^{24\delta})$ cusps $c\in X(p)$ within a distance $(1/2+\delta)\rho_{X(p)} $ of $x$.

\end{enumerate}

\end{prop}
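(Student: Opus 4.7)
The plan is to prove the three parts in sequence: (a) via a delicate height analysis using Lemma~\ref{cuspdist}(a), (b) via a direct hyperbolic-geometry computation, and (c) via an area count in $\H$ together with the injectivity-radius bound.

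For part (a), I argue by contradiction.  Suppose $z_1, z_2 \in B(z_0,(2+\delta)\log p)$ are two distinct preimages of $c$.  After translating by $\Gamma(2,3,p)$, take $z_0 = iy_p$.  Applying Lemma~\ref{cuspdist}(a) to each, write $z_i = \sigma_i \gamma_p(M_i)\cdot iy_p$ with $\sigma_i = \sigma_p^{k_i} \in H$ and $h(M_i) = O(p^{2\delta})$.  The condition that $z_1, z_2$ project to the same cusp, via the identification of cusps with $\Xi(p)\backslash\Gamma(2,3,p)/H$, translates into a matrix congruence
\[
  M_1 \;\equiv\; \epsilon\, T^{k_2-k_1}\, M_2\, T^j \pmod{p}
\]
for some $\epsilon \in \{\pm1\}$ and $j \in \Z/p\Z$.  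Comparing lower-left entries gives $c_1 \equiv \epsilon c_2 \pmod p$, which is an exact equality since $|c_i| = O(p^{2\delta}) < p$ for $\delta$ small.  If this common value is $0$, then $M_i$ is (up to sign) a power of $T$, forcing $\gamma_p(M_i)\cdot iy_p = iy_p$ and hence $z_i = z_0$; this makes $c = c_0$, contradicting our hypothesis.  In the nonzero case, the remaining entry conditions together with $\det M_i = 1$ pin down $k_2 - k_1$ and $j$ uniquely modulo $p$, and a direct computation identifies $\mu := T^{-j}M_2^{-1} T^{k_1-k_2} M_1 \in \Gamma(p)$ with an element of $\ker\gamma_p$, so that $z_1 = z_2$, giving the desired contradiction.

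For part (b), this is a hyperbolic cosine-rule calculation.  By Lemma~\ref{disksep}, $D := d(c_0, c) > 2\log p - 2$.  If $q$ lies in the intersection, applying the hyperbolic cosine rule to the triangles $c_0\,q\,m$ and $c\,q\,m$ (with $m$ the midpoint of $[c_0, c]$) and summing yields
\[
  \cosh d(q, c_0) + \cosh d(q, c) \;=\; 2 \cosh(D/2)\cdot\cosh d(q, m).
\]
Since $d(q, c_0), d(q, c) \leq (1+\delta)\log p$ and $D/2 \geq \log p - 1$, the asymptotic $\cosh R \sim e^R/2$ gives $\cosh d(q, m) \leq e\,p^\delta$, hence $d(q, m) \leq \delta\log p + O(1)$, so the intersection lies in $B(m, \delta\log p + O(1))$.

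For part (c), lift $x$ to $\tilde x \in \H$.  Every cusp within $(1+\delta)\log p$ of $x$ has a preimage in $B(\tilde x, (1+\delta)\log p)$, which must be a $c_p$-vertex of the $(2,3,p)$ tiling.  The ball has area $\sim \pi p^{1+\delta}$, and each $c_p$-vertex is shared among $2p$ triangles of area $\pi(1/6 - 1/p)$, giving a $c_p$-vertex density of $\sim 3/(\pi p)$; hence the ball contains $O(p^\delta)$ such vertices.  By Corollary~\ref{biginj}, two lifts of the same cusp differ by a nontrivial element of $\Xi(p)$ and lie at distance $\geq 4\log p - O(1) > 2(1+\delta)\log p$ for $\delta < 1$, so distinct cusps contribute distinct lifts.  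This gives a bound of $O(p^\delta)$, well within the claimed $O(p^{12\delta})$.

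The main obstacle is the last step of (a): the congruence mod $p$ only places $\mu$ in $\Gamma(p)$, whereas what is really needed is $\mu \in \ker\gamma_p$ (the normal closure of $T^p$).  The height bounds on $M_1, M_2$ together with the constraint $c \neq c_0$ must be leveraged to show that the specific $\mu$ produced is a parabolic element of the form $T^{pk}$ (or conjugate thereof), and hence lies in $\ker\gamma_p$; this is the delicate arithmetic heart of the argument.
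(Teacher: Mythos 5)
Your proof has two genuine gaps. The first is the one you flag yourself in part (a): you reduce everything to showing that a certain $\mu\in\Gamma(p)$ lies in $\ker\gamma_p$, and you leave that step open. The paper avoids this entirely by never trying to land in $\ker\gamma_p$. The right move is: from Lemma \ref{cuspdist}(a) one gets $\gamma_p(MM'^{-1})\in \Xi(p)\langle\sigma_p\rangle$, hence $MM'^{-1}\in\Gamma(p)U$ with $U$ the upper triangular subgroup, i.e.\ the reduction of $MM'^{-1}$ mod $p$ is upper triangular. Since the entries of $MM'^{-1}$ are $O(p^{4\delta})$, for $\delta<\tfrac14$ the lower-left entry and the off-diagonal discrepancy vanish \emph{exactly}, so $MM'^{-1}\in U$ over $\Z$ and therefore $\gamma_p(MM'^{-1})\in\langle\sigma_p\rangle=H$. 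One then concludes from $\Xi(p)\cap H=1$ --- which is automatic because $X(p)\to X(1)_p$ is totally ramified of order $p$ over $c_p$, so $H\cong\Z/p$ injects into $G(p)$ --- that $z=z'$. In other words, you never need to identify elements of $\ker\gamma_p$ (the normal closure of the $p$-th powers of parabolics, which is genuinely hard to recognize); you only need that the deck group meets the cusp stabilizer trivially. Your congruence bookkeeping with $T^{k_2-k_1}$ and $T^j$ is a more complicated packaging of the same height-forces-exact-equality idea, but as written the argument does not close.

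The second gap is in part (c). The density heuristic (area of ball divided by area per $c_p$-vertex) is not a valid upper bound here, because the ball has radius $(1+\delta)\log p$ while a single $(2,3,p)$-tile already has diameter $\approx\log p$ (recall $y_p\approx p$): the disjoint stars of $2p$ triangles around each cusp, of area $\approx \pi p/3$, are not contained in your ball, only in the ball enlarged by $\log p+O(1)$, and a cusp sitting near the boundary of $B(x,(1+\delta)\log p)$ contributes arbitrarily little area inside it. Made rigorous, your count gives only $O(p^{1+\delta})$, which is far weaker than claimed. The paper instead argues arithmetically: a cusp within $(1+\delta)\log p$ of $x$ forces a point over $c_2$ within $O(\delta\log p)$ of a fixed point over $c_2$, and Lemma \ref{c2dist} bounds the number of such points by counting matrices in $\SL_2(\Z)$ of height $p^{O(\delta)}$, yielding the $O(p^{12\delta})$. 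Your part (b), by contrast, is essentially correct and arguably cleaner than the paper's Euclidean disk computation --- the hyperboloid-model identity $\cosh d(q,c_0)+\cosh d(q,c)=2\cosh(D/2)\cosh d(q,m)$ is valid --- but you should note that you must first invoke part (a) to know that, after fixing a lift of $c_0$, all points of the intersection see the \emph{same} lift of $c$ and hence the same midpoint $m$.
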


\begin{proof}

To prove part (a), first note that because the automorphisms of $X(p)$ act transitively on the cusps, it suffices to take $z_0=iy_p$.

Let $z,z'\in\H$ be pre-images of cusps $c,c'$ with $d(iy_p,z)\leq (1+\delta)\rho_{X(p)}$, and likewise for $z'$. We'll first show that $c\neq c'$.  By Lemma \ref{cuspdist} part (a), there are matrices $M,M'\in \SL_2(\Z)$ with $h(M),h(M')= O(p^{4\delta})$ and elements
$\sigma,\sigma'\in\langle\sigma_p\rangle$ such that $\sigma\gamma_p(M)\cdot iy_p=z $ and $\sigma'\gamma_p(M') \cdot iy_p = z'$.

 Since $z,z'$ are translates under $\Xi(p)$, then $\gamma_p(MM'^{-1})\in \Xi(p)\langle \sigma_p \rangle$. Since the kernel of $\gamma_p$ is contained in $\Gamma(p)$
 we have that $MM'^{-1}\in\Gamma(p)U$, where $U$ is the upper triangular group. In other words, the reduction of $MM'^{-1}$ modulo $p$ is upper triangular. However, as the entries of $MM'^{-1}$ are  $O(p^{8\delta})$, it follows for $\delta<\frac14$ that for  large enough $p$ we must have $MM'^{-1}\in U$, which implies that $\gamma_p(MM'^{-1})\in \langle \sigma_p \rangle$, and
 $$\sigma\gamma_p(MM'^{-1})\sigma'^{-1}\in \Xi(p)\cap \langle \sigma_p\rangle = 1.$$ Thus we must have $z=z'$. This proves (a).

To prove part (b), after fixing a lift $z_0$ of $c_0$, $c$ has at most one lift within distance $(1+\delta)\rho_{X(p)}$ of $z_0$ by part (a). The claim then follows from the following lemma:

\begin{lemma}

For all $R>0$ there exists $M=R+O(1)$ such that the following is true:
Let $z,z'\in\H$ such that $d(z,z') = 2D$. Then $B(z,D+R)\cap B(z',D+R)$ is contained in a ball of radius $M$ centered at the midpoint of $z$ and $z'$.

\end{lemma}

\begin{figure}[htbp]
\begin{center}

\input{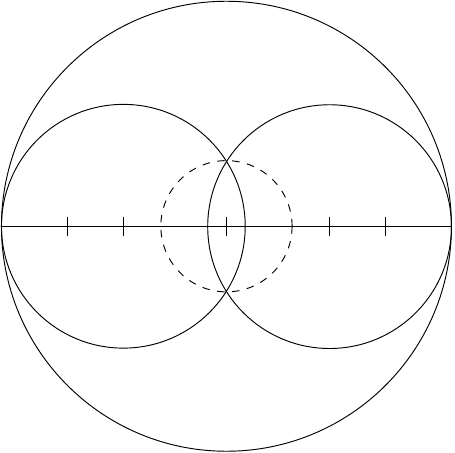_t}
\caption{}
\label{fig:disk}
\end{center}
\end{figure}
\begin{proof}
We work in the Poincar\'e disk model. It suffices to consider $z=r, z'=-r$ for an appropriate real point $0<r<1$. Let $s=\tanh(R/2)$ so that $s$ is distance $R$ to $0$.
 Then $B(z,D+R)$ is contained in the \emph{Euclidean} disk $B_e(\frac{1-s}{2},\frac{1+s}{2})$, as both are bounded by (Euclidean) circles with centers on the $x$-axis and the same containment of the diameters along the $x$-axis clearly holds.  Likewise $B(z',D+R)$ is contained in the Euclidean disk $B_e(\frac{s-1}{2},\frac{1+s}{2})$. These 2 disks intersect in a convex figure which is contained in the Euclidean disk $B_e(0,\sqrt{s})$ (see Figure \ref{fig:disk}), or equivalently the hyperbolic disk $B(0,2\tanh^{-1}(\sqrt{s}))$.

 Now, $\tanh(x) = 1-2e^{-2x}+O(e^{-4x})$ for $x>0$ and $\tanh^{-1}(1-\delta) = -\ln{\delta}/2 + O(1)$ for $\delta<1/2$. Thus
\begin{align*}
2\tanh^{-1}(\sqrt{s}) &= 2\tanh^{-1}\left(\sqrt{1-2e^{-R}+O(e^{-2R})}\right)\\
&= 2\tanh^{-1}(1-e^{-R}+ O(e^{-2R}))\\
&=2(R/2 + O(1))\\
&=R + O(1).
\end{align*}
\end{proof}

To prove (c), we note that by (a) we may assume $x$ is within $\delta\rho_{X(p)} + O(1)$ of a point mapping to $q_2$. Thus, it suffices to prove that $\iota$ is within $(1/2+2\delta)\rho_{X(p)}+O(1)$ of at most $O(p^{24\delta})$ cusps. If $c$ is such a cusp, then $\iota$ must be within $2\delta \rho_{X(p)}+O(1)$ of a point $x$ mapping to $q_2$ neighboring $c$. By Lemma \ref{c2dist} there are at most
$O(p^{24\delta})$ such points $x$. This completes the proof.

\end{proof}

\mysect{Repulsion of CM points}\noindent\label{repel:CM}

We refer to the Kobayashi metric $d_{X(p)\times X(p)}$ simply as $d$.  A ball in this metric is a product of balls on each factor:
\[B(\xi,R)=B(x,R)\times B(y,R)\]

\begin{prop}\label{repulsion}
Let $\xi,\xi'\in \CM$ be distinct CM points in $X(p)\times X(p)$ with the same projections to $X(1)_p\times X(1)_p$ such that $B(\xi,\delta\rho_{X(p)})\cap B(\xi',\delta\rho_{X(p)} )\neq \varnothing$ and either both $\xi$ and $\xi'$ are in $\CM^+$ or both are in $\CM^-$.  Then for all sufficiently small $\delta>0$ and all sufficiently large $p$, $\xi$ and $\xi'$ lie on some Hecke divisor $T_m$ with $m =p^{O(\delta)}$.
\end{prop}

\begin{proof}
Let $\xi=(x,y)$ and $\xi'=(x',y')$.
 Diagonally acting by $G(p)$ does not affect the statement of the proposition. We thus assume without loss of generality that either $x=\iota$ or $x=\varpi$.  Take $t_0=\left(\begin{smallmatrix} 0 & 1\\-1 & 0\end{smallmatrix}\right)\in\SL_2(\Z)$ or $t_0=\left(\begin{smallmatrix} 0 & 1\\-1 & -1\end{smallmatrix}\right)\in\SL_2(\Z)$, respectively, so that $t:=\gamma_p(t_0)\in G(p)$ is an order 4 or 3 stabilizer of $x$.

Set $x=gy$, for $g\in G(p)$.  Note that by the $\CM$ condition, $g$ is in the normalizer of the stabilizer of $x$ (and of $y$).  The commutator subgroup is index two in the normalizer, and $\xi$ is in $\CM^+$ if and only if $g$ is in the trivial coset.  Take a fixed $h_0\in \SL_2(\Z)$ such that $\gamma_p(h_0)$ is in the nontrivial coset and let $h$ be either the identity or $h_0$, so that $\gamma_p(h)^{-1}g$ is in the commutator subgroup. Then letting $g_0=\gamma_p(h)^{-1}g$, 
we see that $g_0$ commutes with $t$.

  Since $d(\xi,\xi')<2\delta\rho_{X(p)}$, it follows from Lemma \ref{c2dist} that there are $M_x,M^0_y\in\SL_2(\Z)$ with $h(M_x)=O(p^{8\delta})$ and 
  $h(M^0_y)=O(p^{8\delta})$ such that $\gamma_p(M_x)\cdot x=x'$ and $g^{-1}\gamma_p(M^0_y)\cdot x=y'$.  
  Then $\gamma_p(M_x(M^0_y)^{-1})g$ maps $ y'$ to $x'$, and so it is in the normalizer subgroup of the stabilizer of $x'$. 
  Thus, $\gamma_p(M_xhM_x^{-1})^{-1}\gamma_p(M_x(M_y^0)^{-1})g$
  commutes with $\gamma_p(M_x)t\gamma_p(M_x)^{-1}$.   
  Setting $M_y=h^{-1}M^0_yh$, this is equivalent to $\gamma_p(M_y)^{-1}g_0\gamma_p(M_x)$ commuting with $t$.
  
Using the identification $G(p)\cong\PGL_2(\F_p)$, we equivalently have the two relations $[t_0,g]=\mathbf{1}$ and $[t_0,M_y^{-1}gM_x]=\mathbf{1}$, which we view as a set of linear equations (defined over $\Z$) in the coefficients of a $2\times 2$ matrix $g\in M_2(\F_p)$ over $\F_p$.

\begin{lemma}

For large enough $p$, the set of solutions $g\in M_2(\F_p)$ to $[t,g]= \mathbf{1}$ and $[t,M_y^{-1}gM_x]=\mathbf{1}$ is at most 1-dimensional.

\end{lemma}

\begin{proof}

Note that these linear equations have coefficients of size $p^{O(\delta)}.$ Let $\tilde{t}$ be a lift of $t$ to $\GL_2(\F_p)$.  The relation $g\tilde t=\tilde t g$ has a 2-dimensional set of solutions in $g$, equal to the span $\langle 1,\tilde t\rangle$ of the identity matrix and $\tilde t$. Thus, either the two relations define a line, or the second relation is redundant, and the second case is equivalent to
 $$\langle M_y^{-1}M_x, M_y^{-1}\tilde{t}M_x\rangle\subset \langle 1,\tilde{t}\rangle.$$
 
It thus follows that $M'=M_y^{-1}\tilde{t}M_y= M_y^{-1}\tilde{t}M_x\cdot(M_y^{-1}M_x)^{-1}$ is also in the span of $1$ and $\tilde{t}$. Since $M'^2 =-1$ it follows that $M'$ is either equal to $\tilde{t}$ or $-\tilde{t}$.  Since $M_y^{-1}t_0M_y$ has coefficients of size $p^{O(\delta)}$, it follows that $M_y^{-1}t_0M_y$ is either $t_0$ or $-t_0$, and similarly for
$M_x^{-1}t_0M_x$.
 
Setting $H$ to be the centralizer of $t_0$, we must have $$M_x^{-1}HM_x =M_y^{-1}HM_y = H.$$ Now, we claim that the elements of the normalizer of $H$ in $\GL_2(\Q)$ which have positive determinant consist exactly of $H$. To prove this, note that its enough to check it after tensoring with $\R$, in which case $H$ becomes an embedded $\C^*\subset \GL_2(\R)$  (unique up to conjugation). Thus, as $M_x,M_y$ have determinant 1, we conclude that $M_x,M_y\in H$.
Finally, note that since $H\cap M_2(\Z)$ is isomorphic to either $\Z[i]$ or $\Z[e^{2\pi i/3}]$  we must have that both $\gamma_p(M_x),\gamma_p(M_y)$ are stabilizers of $x$, contradicting the assumption that our Heegner CM points were distinct.

\end{proof}

Thus, the two relations must not be redundant, and we end up with a single projective solution $g$, which must lie in the span of $\textbf{1},\tilde{t}$. Now, as finding a kernel of a linear map is polynomial in the entries of a map, we can find an integral representative for $g$ with entries of size $p^{O(\delta)}$. Thus $g=a + b\left(\begin{smallmatrix}0&1\\-1&0\end{smallmatrix}\right)$ where $\max(a,b)= p^{O(\delta)}$, and $(x,y)$ lies on $T_m$ where $m=\det g = a^2+b^2$.

\end{proof}

\begin{remark}Note that because isogenies preserve the Weil pairing up to a scalar, Hecke curves do not pass through anti-Heegner $\CM$ points.  Thus, Proposition \ref{repulsion} implies in particular that \emph{all} anti-Heegner CM points repel.
\end{remark}
\newpage
\mysect{Repulsion of singular bicusps}\noindent

\begin{prop}\label{repulsionbicusp}
For all sufficiently small $\delta>0$ and all sufficiently large $p$, if $\xi\in X(p)\times X(p)$ is a point that is within $(1/2+\delta)\rho_{X(p)} $ of at least 3 singular bicusps, then
all the singular bicusps within $(1/2+\delta)\rho_{X(p)}$ of $\xi$ lie on the same Hecke divisor $T_m$ with $m=p^{O(\delta)}$.
\end{prop}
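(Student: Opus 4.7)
The strategy adapts Proposition \ref{repulsion} to singular bicusps. Let $(a_i,b_i)$ for $i=1,2,3$ be three distinct singular bicusps with $d(\xi,(a_i,b_i))\leq (1+\delta)\log p$. Acting diagonally by $G(p)=\PGL_2(\F_p)$, I would normalize $a_1=c_0$; since $(a_1,b_1)$ is a singular bicusp and the cusps sharing the stabilizer $H=\langle\sigma_p\rangle$ of $c_0$ form a $B/H$-torsor, we have $b_1=\beta_1 c_0$ with $\beta_1\in N_G(H)=B$.

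By the triangle inequality $d(c_0,a_i),d(\beta_1c_0,b_i)\leq 2(1+\delta)\log p$ for $i=2,3$, and Lemma \ref{cuspdist}(b) (with $\delta$ replaced by $2\delta$) produces integer matrices $M_i,N_i\in\SL_2(\Z)$ of height $O(p^{4\delta})$ with $a_i=\eta_i\gamma_p(M_i)c_0$ and $b_i=\beta_1\eta_i'\gamma_p(N_i)c_0$ for specific $\eta_i,\eta_i'\in H$. The bicusp condition---that $a_i$ and $b_i$ share a stabilizer---then translates, after simplification using $H\triangleleft B$, to
\[\gamma_p(M_i)^{-1}\beta_1 h_i\gamma_p(N_i)\in B\]
for an element $h_i\in H$ depending on $\eta_i,\eta_i',\beta_1$. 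Parametrizing $h_i=I+t_iE$ with $E=\left(\begin{smallmatrix}0&1\\0&0\end{smallmatrix}\right)$ and $t_i\in\F_p$, and lifting $\beta_1$ to an upper-triangular $\hat\beta_1\in\GL_2(\F_p)$, the vanishing of the $(2,1)$-entry of $\gamma_p(M_i)^{-1}\hat\beta_1(I+t_iE)\gamma_p(N_i)$ modulo $p$ is a polynomial equation in the entries of $\hat\beta_1$ and $t_i$ whose coefficients are bounded by $p^{O(\delta)}$.

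Taken together, the three bicusps yield a linear system over $\F_p$ with coefficients of height $p^{O(\delta)}$ in the unknowns $\hat\beta_1$ (two-dimensional as $\hat\beta_1\in B$) and $t_2,t_3$. A direct calculation---analogous to the CM case of Proposition \ref{repulsion}, but with the extra $t_i$'s making a third bicusp necessary---shows that in the non-degenerate case the system determines $\hat\beta_1$ up to finitely many possibilities that lift to an integer matrix $M\in M_2(\Z)$ of height $p^{O(\delta)}$ with $\gamma_p(M)\equiv\beta_1\pmod p$. Setting $m=|\det M|=p^{O(\delta)}$, the pair $(a_1,b_1)$ lies on the Hecke divisor $T_m$; since the integer matrices $M_i^{-1}MN_i$ have the same determinant $m$ and realize the bicusp $(a_i,b_i)$, each $(a_i,b_i)$ lies on the same $T_m$, and the argument applied to any further bicusp close to $\xi$ produces the same divisor $T_m$.

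The main obstacle is the degeneracy analysis: if the equations over $\F_p$ are redundant because they are redundant over $\Q$, one obtains a rational relation among $\beta_1$, the $M_i$, and the $N_i$ that is compatible only with two of the bicusps coinciding, contradicting distinctness. As in Proposition \ref{repulsion}, this requires a careful study of the normalizer of $H$ in $\GL_2(\Q)$. The hypothesis of three (rather than two) bicusps is essential: the unknowns $t_i$ absorb one degree of freedom per bicusp, so that only the third bicusp truly pins down $\beta_1$ modulo scalar.
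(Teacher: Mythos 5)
Your translation of the bicusp condition into $\gamma_p(M_i)^{-1}\beta_1 h_i\gamma_p(N_i)\in B$ is correct, and the general toolkit you invoke (double cosets of the Borel, small-height integral lifts, ``redundant over $\F_p$ iff redundant over $\Q$'') is indeed the one the paper uses. But the system you write down does not constrain $\beta_1$ at all. Treating $t_2,t_3$ as free unknowns, each additional bicusp contributes one equation \emph{and} one new unknown, so the count never closes; worse, each equation is individually vacuous. Writing $\hat\beta_1=\left(\begin{smallmatrix}a&b\\0&d\end{smallmatrix}\right)$, $M_i=(m_{jk})$ and $N_i=(n_{jk})$, the $(2,1)$-entry of $\gamma_p(M_i)^{-1}\hat\beta_1(I+t_iE)\gamma_p(N_i)$ is $-a\,m_{21}n_{21}\,t_i$ plus a term independent of $t_i$; so whenever $m_{21}n_{21}\not\equiv 0 \pmod p$ (equivalently $a_i\neq a_1$ and $b_i\neq b_1$, the generic case) there is a solution $t_i$ for \emph{every} $\hat\beta_1\in B$. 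Nor can you instead substitute $t_i=t_i(\beta_1,\eta_i,\eta_i')$ and solve for $\beta_1$: the elements $\eta_i,\eta_i'$ produced by Lemma \ref{cuspdist}(b) are arbitrary elements of the order-$p$ group $H$ --- they record which of the $p$ tiles around the cusp the relevant geodesic passes through --- and are not of height $p^{O(\delta)}$, so they would contaminate the coefficients and destroy the $\F_p$-versus-$\Q$ redundancy argument on which your degeneracy analysis relies.

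The missing idea is the paper's preliminary step. One first shows that each coordinate $x,y$ of $\xi$ is within $(1+\delta)\log p$ of at least two \emph{distinct} cusps: otherwise two distinct cusps sharing a stabilizer would be within $2(1+\delta)\log p$ of each other, producing a small-height $M\in\SL_2(\Z)$ whose reduction lies in $N(H)$, forcing $M$ upper triangular, hence $\gamma_p(M)\in H$ and the two cusps equal. The overlap of the two cusp-balls then forces $x$ and $y$ to lie within $O(\delta\log p)$ of preimages of the order-$2$ point $c_2$, so one normalizes $x=\iota$, $y=g\iota$ and takes $g\in G(p)$ as the unknown. Measured from $\iota$ rather than from a cusp, every nearby cusp is exactly $\gamma_p(M)\cdot c_0$ with $h(M)=p^{O(\delta)}$ and \emph{no} $H$-ambiguity, and each bicusp gives the clean condition $g\in\gamma_p(M_1)N(H)\gamma_p(M_2)^{-1}$, i.e.\ $g$ sends $M_2\infty$ to $M_1\infty$ in $\P^1(\F_p)$. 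Three bicusps then either prescribe the images of three distinct points --- determining $g$ projectively, with an integral representative of height $p^{O(\delta)}$ and hence $(c,c')\in T_m$ for $m=\det g$ --- or prescribe at most two, which the height bounds show is incompatible with having three distinct bicusps. So your instinct that three bicusps are essential is right, but for this reason, not because the $t_i$ ``absorb one degree of freedom each.''
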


\begin{proof}

Suppose $\xi=(x,y)$ is a point as in the proposition. We first claim that each of $x,y$ is within $(1/2+\delta)\rho_{X(p)} $ of at least 2 distinct cusps. Suppose not, so that without loss of generality there exist a unique cusp $c_0$ within $(1/2+\delta)\rho_{X(p)}$ of $x$. That means that $y$ is within $(1/2+\delta)\rho_{X(p)} $ of at least 2 distinct cusps $c,c'$ which have the same stabilizer as $c_0$. Without loss of generality, by acting with $G(p)$ we can assume that $c = iy_p$. Thus, denoting $H=\langle \sigma_p \rangle$, by Lemma \ref{cuspdist} we have elements $\sigma\in H, M\in\SL_2 \Z$ such that
$h(M)=p^{O(\delta)}$ and $c'=\sigma\gamma_p(M)\cdot c$. Therefore $\sigma\gamma_p(M)\in N(H)$, and so $\gamma_p(M)\in N(H)$. Since $h(M)=p^{O(\delta)}$ and the only upper triangular matrices in $\SL_2 \Z$ are strictly upper triangular, it follows that $\gamma_p(M)\in H$. Thus $c=c'$, which is a contradiction.

Thus each of $x,y$ is within $(1/2+\delta)\rho_{X(p)} $ of at least 2 distinct cusps, and also within $\delta\rho_{X(p)} +O(1)$ of a pre-image of $q_2$. At the cost of decreasing $\delta$ by a factor of $2$ and acting by $G(p)$
we can assume that $x=\iota$ and $y=gx$ for some $g\in G(p)$. Now suppose $(c,c')$ is a singular bicusp that is within $(1/2+\delta)\rho_{X(p)} $ of $(x,y)$. Then it follows similarly to Lemmas \ref{cuspdist} and \ref{c2dist} that there exist
elements $M_1,M_2\in\SL_2(\Z)$ with $h(M_1),h(M_2) = p^{O(\delta)}$ such that $c = \gamma_p(M_1)\cdot iy_p$ and $c' = g\gamma_p(M_2)\cdot iy_p$. It thus follows that $$\gamma_p(M_1)^{-1}g\gamma_p(M_2)\in N(H),$$ or alternatively that
$$g\in \gamma_p(M_1)N(H)\gamma_p(M_2)^{-1}.$$ Now, note that $G(p)$ acts on $\P^1(\F_p)$ and $\gamma_1N(H)\gamma_2^{-1}$ are exactly those elements of $G(p)$ that take $\gamma_2\infty$ to $\gamma_1\infty$ for any $\gamma_1,\gamma_2\in G(p)$. Thus the intersection of any finite number of double cosets of the form $\gamma_1N(H)\gamma_2^{-1}$ is given by specifying the images of finitely many points in $\P^1(\F_p)$. Let $A(g)$ denote the set of pairs $(M_1,M_2)$ of matrices $M_1,M_2\in \SL_2(\Z)$ for which $g\in \gamma_p(M_1)N(H)\gamma_p(M_2)^{-1}$ and consider the intersection $$B(g):=\bigcap_{(M_1,M_2)\in A(g)} \gamma_p(M_1) N(H)\gamma_p(M_2)^{-1}.$$ There are 2 cases:

\begin{enumerate}

\item $B(g)$ specifies where at most 2 distinct points go, but no more. This means that for all the $M_2$ that occur in $A(g),\gamma_p(M_2)$ lies in at most 2 right $N(H)$ orbits.
Now, let $(M_1,M_2),(M_1',M_2')$ be two pairs in $A(g)$ for which $\gamma_p(M_2),\gamma_p(M_2')$ are in the same right $N(H)$-orbit. Then $\gamma_p(M_2'^{-1}M_2)\in N(H)$, and since $h(M_2'^{-1}M_2)=p^{O(\delta)}$, it follows that for large enough $p$, $M_2$ and $M_2'$ must be in the same right $H$-orbit, and similarly for $M_1,M_1'$. But this means that they correspond to the same bicusp, and thus there are at most 2 distinct singular bicusps within $(1/2+\delta)\rho_{X(p)} $ of $\xi$, which is a contradiction.

\item $B(g)$ specifies where 3 distinct points go, and thus specifies $g$. Note that this means that there is a representative for $g$ which is polynomial in the coefficients of the linear equations mod $p$, all of whose coefficients are reductions of elements in $\Z$ of size $p^{O(\delta)}$. Thus there is an integral representative for $g$ with entries of size $p^{O(\delta)}$. Since $c' = g\gamma_p(M_2M_1^{-1})\cdot c$ it follows that $(c,c')$ is on some $T_m$ with $m=\det g = p^{O(\delta)}$, as desired.

\end{enumerate}

\end{proof}

\mysect{Repulsion of diagonals}\label{diagsect}
Let $\pi_{i}:(X(p)\times X(p))^{2}\into X(p)\times X(p)$ be the projection onto the $i$th factor; we denote a point $\xi\in (X(p)\times X(p))^2$ by $\xi=(x_1,y_1,x_2,y_2)$ where $\pi_i(\xi)=(x_i,y_i)$ for $i=1,2$.  By the big diagonals of $(X(p)\times X(p))^2$ we mean the subvarieties of the form
\[\Delta_g=\{(gx,gy,x,y)\}\subset (X(p)\times X(p))^2\]
for some fixed $g\in G(p)$, and by a small Hecke curve of degree $m$ we'll mean
\[\tau_{g,m}=\{(gx,gy,x,y)\mid (x,y)\in T_m\}\subset \Delta_g\]
In the following proposition we'll be concerned with the Kobayashi neighborhoods of the big diagonals, which take the form
\[B(\Delta_g,R)=\{(x_1,y_1,x_2,y_2)\mid d(x_1,gx_2)<2R\textrm{ and }d(y_1,gy_2)<2R\}\]
\begin{prop}\label{repulsiondiag}  For all sufficiently small $\delta>0$ and all sufficiently large $p$, if a point $\xi\in (X(p)\times X(p))^2$ is in $\omega(\log p)$ many distinct neighborhoods
$B(\Delta_g,\delta\rho_{X(p)})$ then one of the following must be true:

\begin{enumerate}
\item[(a)]  $\xi$ is within a distance $(1/2+O(\delta))\rho_{X( p)}$ of a point both of whose projections are singular bicusps. In this case, $\xi$ is in $O(p\cdot \delta \rho_{X(p)}/a(2d)^{1/2})$ many such neighborhoods where $d$ is the
        smaller of the distances of $\pi_1(\xi)$ and $\pi_2(\xi)$ to a singular bicusp;
\item[(b)]  $\xi$ is within a distance $O(\delta)\rho_{X(p)}$ of a small Hecke curve $\tau_{g,m}$ of degree $m=p^{O(\delta)}$.
\end{enumerate}
\end{prop}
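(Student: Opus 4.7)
The plan is to adapt the arguments of Propositions \ref{repulsioncusp}, \ref{repulsion}, and \ref{repulsionbicusp}: reduce the geometric hypothesis to a Diophantine count of matrices in $\SL_2(\Z)$ of height $p^{O(\delta)}$, and then use linear algebra modulo $p$ to either bound the number of candidates by $O(1)$ or extract a Hecke relation. To begin, using the diagonal $G(p)\times G(p)$-action on $(X(p)\times X(p))^2$, which sends $\Delta_g$ to $\Delta_{h_1gh_2^{-1}}$, I normalize $g_1 = 1$. Then $(x_1,y_1)$ lies within $\delta\log p$ of $(x_2,y_2)$, and for each $i\geq 2$ the triangle inequality gives $d(x_2, g_ix_2), d(y_2, g_iy_2) < 2\delta\log p$. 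It thus suffices to bound the cardinality of
\[S := \{g \in G(p) : d(x_2,gx_2) < 2\delta\log p \text{ and } d(y_2,gy_2) < 2\delta\log p\}\]
and, assuming $|S|=\omega(\log p)$, to extract either conclusion (a) or (b).

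Suppose first that at least one of $x_2,y_2$ is within $(1+\delta)\log p$ of a cusp; say $x_2$ is, and by Proposition \ref{repulsioncusp}(a) the nearby cusp $c$ is unique. Each $g\in S$ must then carry $c$ to a cusp within $(1+O(\delta))\log p$ of $x_2$, which by uniqueness is $c$, so $g\in \Stab(c)$, a cyclic group of order $p$. A direct horocycle estimate using Lemma \ref{c2dist} shows that if $y_2$ were away from every cusp, then only $O(\log p)$ elements of $\Stab(c)$ could move $y_2$ by less than $2\delta\log p$, contradicting $|S|=\omega(\log p)$; hence $y_2$ is within $(1+\delta)\log p$ of a cusp $c'$ stabilized by every $g\in S$, so $(c,c')$ share a stabilizer and form a singular bicusp. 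Applying the same reasoning to $(x_1,y_1)$ places it near the same singular bicusp, giving (a); the bound $|S|=O(p^{1+\delta/2}e^{-d})$ then follows from Lemma \ref{im} applied to counting elements of the common cusp stabilizer that displace $x_2$ or $y_2$ by at most $2\delta\log p$.

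Otherwise, $x_2,y_2$ both lie at distance $>(1+\delta)\log p$ from every cusp, and after slightly shrinking $\delta$ also away from the CM orbifold preimages of $c_2,c_3$. Writing $x_2 = a\cdot \iota$ and $y_2 = b\cdot \iota$ for suitable $a,b\in G(p)$ with $\iota$ the image of $i\in\H$, Lemma \ref{c2dist} represents each $g\in S$ as $g = a\gamma_p(M_g)a^{-1} = b\gamma_p(N_g)b^{-1}$ in $\PGL_2(\F_p)$ for matrices $M_g, N_g\in \SL_2(\Z)$ of height $p^{O(\delta)}$. Setting $h := a^{-1}b$, the resulting conjugation relation $\gamma_p(M_g) = h\gamma_p(N_g)h^{-1}$ becomes a linear system modulo $p$ in the entries of $g$, and as in the proof of Proposition \ref{repulsion} either cuts the solution set for $g$ down to $0$ dimensions (giving $|S|=O(1)$, contradicting the hypothesis) or is redundant. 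In the redundant case, the bounded-height argument of Proposition \ref{repulsion} propagates the redundancy from $\F_p$ to $\Q$, producing an integral representative for $h$ of height $p^{O(\delta)}$; this forces $(x_2,y_2)$ to lie within $O(\delta\log p)$ of a Hecke curve $T_m$ for $m=|\det h|=p^{O(\delta)}$, and hence $\xi$ to lie within $O(\delta\log p)$ of the small Hecke curve $\tau_{h,m}$, giving (b).

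The main obstacle will be the third step: carefully controlling the $\Xi(p)$-ambiguity in lifting $g_i$ to $\Gamma(2,3,p)$ when $\tilde x_2$ and $\tilde y_2$ are separated in $\H$, and handling the boundary cases where one of $x_2,y_2$ approaches a CM orbifold point. Both subtleties require combining the algebraic dimension-count of Proposition \ref{repulsion} with the stabilizer estimates of Proposition \ref{repulsionbicusp}, and keeping track of how the height bound $p^{O(\delta)}$ degrades as one passes through the several renormalizations.
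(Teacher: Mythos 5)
Your overall two-case division and your Case 1 are essentially the paper's argument: normalize one diagonal to the identity, reduce to counting $g$ with $d(x_2,gx_2),d(y_2,gy_2)<2\delta\log p$, show that near a cusp all such $g$ lie in the cusp stabilizer, force $y_2$ toward a cusp with the same stabilizer, and count rotations around the singular bicusp to get $O(p^{1+\delta/2}e^{-d})$. One caveat: uniqueness of the nearby cusp requires the proximity threshold to be below $\log p$ (the paper uses $(1-2\delta)\log p$ via Lemma \ref{disksep}); within $(1+\delta)\log p$ there can be $p^{O(\delta)}$ nearby cusps by Proposition \ref{repulsioncusp}(c), so your invocation of ``uniqueness'' at radius $(1+\delta)\log p$ does not hold as stated, and the annulus between the two thresholds must then be absorbed into your second case.

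The genuine gap is in Case 2, where your dichotomy is both inverted and incomplete. The relations $\gamma_p(M_g)h=h\gamma_p(N_g)$ are linear in $h$ (not in $g$), and the case in which they cut the solution space down to a projective point is the case that yields conclusion (b): $h$ is then determined by bounded-height data, so it admits an integral representative of height $p^{O(\delta)}$ and $(x_2,y_2)$ lies near $T_m$ with $m=\det h$. A zero-dimensional solution set does not give $|S|=O(1)$ --- $h$ being pinned down says nothing about how many $g$ there are (take $h=1$ with $x_2=y_2$ a CM point). Conversely, in the redundant case you cannot extract a bounded-height representative for $h$: redundancy means precisely that the relations fail to determine $h$. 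What actually happens there (this is the heart of the paper's Case 2) is that the span $T$ of the conjugating data has nontrivial centralizer, hence is a two-dimensional subalgebra of $M_2(\F_p)$, either a torus or $\F_p[x]/(x^2)$. In the torus case all the $M_g$ are reductions of powers of a single semisimple element of $\SL_2(\Z)$, and only $O(\log p)$ such powers have height $p^{O(\delta)}$ --- this is the \emph{only} place the hypothesis ``$\omega(\log p)$ many neighborhoods'' (rather than ``at least two'') is used, and it is absent from your argument. In the unipotent case all the $g$ stabilize a common cusp and one is pushed back into conclusion (a), which matters precisely for the annular regime your Case 1 threshold leaves uncovered. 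Without these two sub-cases your proof would, for instance, wrongly assert a Hecke relation when the $g\in S$ all lie in a common torus, where $h$ is only constrained up to the normalizer of that torus and need not have small height.
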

Note that with our usual normalization, $O(p\cdot \delta \rho_{X(p)}/a(2d)^{1/2})=O(p^{1+\delta}e^{-d})$ by Corollary \ref{biginj}.
\begin{proof} Let  $\xi=(x_1,y_1,x_2,y_2)$ be a point in $\omega(\log p)$ many neighborhoods $B(\Delta_g,\delta\rho_{X(p)})$.  We split the proof up into 2 cases:
\subsubsection*{\underline{Case 1:}}

First suppose one of the coordinates, say $x_1$, is within a distance $(1/2-3\delta)\rho_{X(p)}-O(1)$ of a cusp $c$, which after acting by an element of $G(p)$ we may assume to be the image of $iy_p$.  For each $g\in G(p)$ such that $d(x_1,gx_2)<2 \delta\rho_{X(p)}$ it must be the case that the cusp nearest to $gx_2$
is also $c$, by Lemma \ref{disksep}. Hence, since $\xi$ is in many diagonal neighborhoods, $g$ must be in a right $H$ coset, where $H$ is the stabilizer of $c$. Without loss of generality, we can similarly assume the nearest cusp to $x_2$ is $c$ and therefore the set of all $g$ such that $\xi\in B(\Delta_g,\delta \rho_{X(p)})$ is inside $H$.

Next, let $c'=g_0c$ be the cusp closest to $y_1$, and assume that $y_1$ is not within $(1/2+2\delta)\rho_{X(p)}$ of any cusp stabilized by $H$.  Then $y_1$ can't be within
$(1/2-2\delta)\rho_{X(p)}$ of $c'$, or else there would be no $h\in H$ such that $d(y_1,hy_1)< 4\delta\rho_{X(p)}$, which must be the case since $\xi\in B(\Delta_h,\delta\rho_{X(p)})\cap B(\Delta_1,\delta\rho_{X(p)})$. Thus $y_1$ is within $2\delta\rho_{X(p)} +O(1)$ of a point projecting to $q_2$, which we may assume to be $g_0\iota$. Now, for each $h\in H$ such that $\xi\in B(\Delta_h,\delta\rho_{X(p)} )$, we have $d(g_0\iota,hg_0\iota)<8\delta\rho_{X(p)}+O(1)$, so by Lemma \ref{c2dist} there is a matrix $M\in\SL_2(\Z)$ with $h(M)=O(p^{32\delta})$ such that $\gamma_p(M)=g_0^{-1}hg_0$.  $M$ is unipotent, and has a fixed vector $v$ whose coordinates have size $p^{O(\delta)}$, so we can find $M'\in \SL_2(\Z)$ with $h(M')=p^{O(\delta)}$ sending $v$ to $\infty$, and thus we have $g_0\in N(H)\gamma_p(M')$.  As $g_0\iota$ is within $(1/2+ O(\delta))\rho_{X(p)} $ of a cusp stabilized by $H$, $y_1$ is as well, and $\pi_1(\xi)$ is within a distance $R\leq (1/2+O(\delta))\rho_{X(p)}$ of a singular bicusp.  Since $\xi$ is close to some diagonal, $x_2$ is within $(1/2-\delta)\rho_{X(p)} -O(1)$ of a cusp, so running the same argument (after shrinking $\delta$), we also get that $\pi_2(\xi)$ is within $R$ of a singular bicusp.

 Finally, under any projection to $X(p)$, let $x$ be the image of $\xi$ and $c$ the image of the nearby singular bicusp (a distance $d$ away).  If $\sigma\in G(p)$ is a generator of the stabilizer of $c$, then the images $\sigma^kx$ equidistribute around the boundary of the ball $B(c,d)$ of radius $d$ (note that $d$ is less than the injectivity radius).  $B(c,d)$ has circumference of length $O(a(2d)^{1/2})$, so there are $O(p\cdot \delta \rho_{X(p)}/a(2d)^{1/2})$ images within a distance $2\delta\rho_{X(p)}$ of $x$.
\subsubsection*{\underline{Case 2:}}

Now assume that none of the coordinates of $\xi$ is within a distance $(1/2-3\delta)\rho_{X(p)}-O(1)$ of a cusp, and we show that $\xi$ must be within $O(\delta\rho_{X(p)} )$ of a small Hecke curve.  After shrinking $\delta$, we can assume $\xi=(\gamma_1 \iota,\gamma_2 \iota,\gamma_3 \iota,\gamma_4 \iota)$, since $\xi$ is within a radius of $O(4\delta\rho_{X(p)})$ of such a point.  For each $g$ such that $\xi\in B(\Delta_g,\delta\rho_{X(p)})$, by Lemma \ref{c2dist} there exist $M_g,M_g'\in\SL_2(\Z)$ with $h(M_g),h(M_g')=O(p^{8\delta})$ such that $\gamma_p(M_g)=\gamma_1^{-1}g\gamma_3$ and $\gamma_p(M_g')=\gamma_2^{-1}g\gamma_4$, or in other words that
\[\gamma_1\gamma_p(M_g)\gamma_3^{-1}=\gamma_2\gamma_p(M_g')\gamma_4^{-1}\]
Two distinct such elements $g,h$ would then yield matrices $N={M_h}^{-1}M_g$ and $N'={M'_h}^{-1}M'_g$ with $h(N),h(N')=O(p^{16\delta})$ such that
\[\gamma_3\gamma_p(N)\gamma_3^{-1}=\gamma_4\gamma_p(N')\gamma_4^{-1}\]
or equivalently
\[\gamma_p(N)=\gamma \gamma_p(N')\gamma^{-1}\]
for $\gamma=\gamma_3^{-1}\gamma_4$.

Defining $$S_\delta:=\{M\in \SL_2(\Z)\mid h(M)=O(p^{16\delta})\},$$
and $\bar S_\delta\subset \PSL_2(\F_p)$ its reduction $\textmod$ $p$, we see that the number of diagonal neighborhoods containing $\xi$ is bounded by
\[|\gamma_p(\bar S_\delta)\cap \gamma \gamma_p(\bar S_\delta)\gamma^{-1}|.\]

Now consider the larger set $S'_{\delta} = \{M\in M_2(\Z)\mid h(M)=O(p^{6\delta})\}$, $\bar S'_\delta\subset M_2(\F_p)$ its reduction, and the subspace
$$T:=\Span(\gamma_p(\bar S'_{\delta})\cap \gamma \gamma_p(\bar S'_{\delta})\gamma^{-1})\subset M_2(\F_p).$$ We now separate into two cases:

\begin{enumerate}

\item  The centralizer of $T$ consists of more than just scalars.  It follows that $T$ is a sub-algebra, and so it must either be a torus, or isomorphic to $\F_p[x]/(x^2)$.  If $T\cong \F_p[x]/(x^2)$ then all the elements in $$\gamma_p(\bar S_\delta)\cap \gamma \gamma_p(\bar S_\delta)\gamma^{-1}$$ are in a single unipotent subgroup $U$. Thus as in the analysis of $y_1$ in Case 1 all the coordinates of $\xi$ must be within $(1+O(\delta))\log p$ of a cusp stabilized by $U$ and we are in part (a) of the proposition.

If $T$ is a torus, by picking a non-scalar element $\gamma_p(M)\in T$ we can lift $T$ to a torus $\tilde{T}\subset \textrm{M}_2(\Q)$ spanned by $\textbf{1}$ and $M$. Thus the elements in $$\gamma_p(\bar S_\delta)\cap \gamma \gamma_p(\bar S_\delta)\gamma^{-1}$$ are reductions of elements in the norm 1 subgroup of $\tilde{T}$, and hence are generated by a single semisimple element $\gamma_p(M)$. As there are at most $O(\log p)$ elements $M^k$ with height bounded by $p^{O(\delta)}$, it cannot be the case that $\xi$ is in $\omega(\log p)$ many diagonal neighborhoods.

\item The centralizer of $T$ consists of scalars. This means we can pick at most three elements in $A_1,A_2,A_3\in T$ such that they have no common centralizer outside of scalars.
Thus, $\gamma$ is determined projectively by the three elements $\gamma A_i\gamma^{-1}$. Since $h(A_i)=O(p^{6\delta})$, this means we can find a projective representative $\tilde{\gamma}\in\GL_2(\F_p)$ for $\gamma$ with entries of size $p^{O(\delta)}$. Thus, by Gaussian elimination and the Euclidean algorithm, we can find elements $M_1,M_2\in \SL_2(\Z)$ of height $p^{O(\delta)}$ such
that $$\gamma_p(M_1) \tilde{\gamma}\gamma_p(M_2)=\mat{0}{m}{-1}{0},$$ where $m=\det\tilde{\gamma} = p^{O(\delta)}.$

Next, note that $d_{Y(p)}(i,i\sqrt{m}) = \frac12\log m$, and that $$\left(i\sqrt{m},\mat{0}{m}{-1}{0}\cdot i\sqrt{m}\right)\in T_m.$$ Thus, since the metric $h_{Y(p)}$ on $Y(p)$ is strictly smaller than the metric $h_{X(p)}$, by the above and Proposition \ref{metcom1} we have

\begin{align*}
d_{X(p)}((\gamma_3\iota,\gamma_4\iota),T_m) &= d_{X(p)}((\iota,\gamma\iota),T_m)\\
&\leq d_{X(p)}(\gamma_p(M_1^{-1})\iota,\iota) + d_{X(p)}(\gamma_p(M_2)\iota,\iota) + d_{X(p)}((\gamma_p(M_1)\iota,\gamma\gamma_p(M_2)\iota),T_m)\\
&\leq O(\delta\rho_{X(p)} )+ d_{Y(p)}((\iota,\left(\begin{smallmatrix}0 & m \\ -1 &0\end{smallmatrix}\right)\iota),T_m)\\
&\leq O(\delta\rho_{X(p)}) + 2d_{Y(p)}(i,i\sqrt{m}) \\
&\leq O(\delta\rho_{X(p)})\\
\end{align*}
\noindent which establishes the claim, since $m=p^{O(\delta)}$.

\end{enumerate}

\end{proof}

\section{Volume estimates}\label{volest}
In this section we prove that, for certain special subvarieties $Z$ of hyperbolic manifolds, the total volume of a curve
in a tubular neighborhood of $Z$ of radius $r$ grows sharply as a function of $r$. This has two consequences:  one can effectively bound the volume in a radius $r$ tube by the volume in a larger radius $R>r$ tube, and in the limit $r\rightarrow 0$ one can effectively bound the multiplicity of a curve along $Z$ by the volume in a radius $R$ tube.  In both cases, bigger neighborhoods give better bounds.


\mysect{Global volume estimates}

Let $X$ be a hyperbolic curve and consider any curve $C\subset X\times X$.  Work of Hwang and To \cite{hwangto1,hwangto2} provides a bound on the multiplicity of $C$ at a point $\xi\in X\times X$ in terms of the volume of $C$ in a Kobayashi ball centered at $x$.  Similarly, the multiplicity of $C$ along the diagonal $\Delta\subset X\times X$ is bounded by its volume within the Kobayashi tubular neighborhood
\[B(\Delta, r):=\{(x,y)\mid d(x,y)<2r\}\subset X\times X\]
of the diagonal.  For $r<\rho_X$, $B(\Delta,r)$ is the quotient of the neighborhood
\[B(\Delta_\D,r)=\{(z,w)\mid d(z,w)<2r\}\subset \D\times\D\]
by the diagonal action of $\pi_1(X)$.

We employ the conventions from the Introduction.  In particular, recall that we define $a(r):=\vol B_\D(0,r)$ to avoid dependence on the normalization in the following theorems, though for computations we always take the curvature of $\D$ to be $-1$.  We then have:

\begin{theorem}\label{HT}  For any curve $C\subset X\times X$:
\begin{enumerate}
\item[(a)]\hspace{.01in}\cite[Theorem 2 ]{hwangto1}  For any point $\xi\in X\times X$, and $r<\rho_X$, then
\[\frac{1}{a(r)}\vol(C\cap B(\xi,r))\geq \mult_\xi (C).\]

\item[(b)]\hspace{.01in}\cite[Theorem 1]{hwangto2}  For any $r<\rho_X/2$,
\[\frac{1}{a(r)}\vol(C\cap B(\Delta,r))\geq 2(C\cdot\Delta).\]

\end{enumerate}
\end{theorem}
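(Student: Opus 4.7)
The strategy is Lelong-type monotonicity --- the hyperbolic analogue of Federer's classical Euclidean inequality bounding the area of a holomorphic curve through the origin in a Euclidean ball from below by $\pi r^2\cdot\mult$. I would treat (a) and (b) in parallel.

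\textbf{Reduction to $\D\times\D$.} Since $r<\rho_X$, the ball $B(\xi,r)$ in (a) and the tube $B(\Delta,r)$ in (b) each lift biholomorphically from $X\times X$ to an open region of the universal cover $\D\times\D$. The curve $C$ lifts to a curve $\tilde C$ there, preserving $\mult_\xi C$ in (a) and the intersection number $C\cdot\Delta$ with the diagonal $\Delta_\D$ of $\D\times\D$ in (b). It thus suffices to prove the analogous inequalities for $\tilde C$ in $\D\times\D$.

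\textbf{Choice of model function.} In each case one seeks a plurisubharmonic exhaustion $\psi$ whose sublevel sets are the relevant regions and whose associated Lelong residue encodes the multiplicity or intersection number. Natural candidates are
$$\psi_a(z,w)=4\max\!\bigl(\sinh^2(d_\D(0,z)/2),\,\sinh^2(d_\D(0,w)/2)\bigr),\quad \psi_b(z,w)=4\sinh^2(d_\D(z,w)/4),$$
for which $\{\psi_a<4\sinh^2(r/2)\}=B(0,r)\times B(0,r)$ and $\{\psi_b<4\sinh^2(r/4)\}=B(\Delta_\D,r)$. Both are plurisubharmonic: $\psi_a$ because $\sinh^2(d_\D(0,z)/2)=|z|^2/(1-|z|^2)$ is subharmonic in one complex variable, and $\psi_b$ because $\cosh(d_\D(z,w))$ is smooth and strictly plurisubharmonic on $\D\times\D$, a standard reflection of negative sectional curvature.

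\textbf{Monotonicity formula.} The key analytic step is a Poincar\'e--Lelong comparison of the form $dd^c\log\psi \geq c\,\omega_{std}$ on the regular locus of $\log\psi$, together with the residue identification of $dd^c\log\psi$ at its singular set (a $\delta$-mass at $(0,0)$ in (a), the current of integration along $\Delta_\D$ in (b)). Integrating against the current $[\tilde C]$ and applying Stokes' theorem then gives that
$$T\longmapsto \vol(\tilde C\cap\{\psi<T\})/T$$
is monotone nondecreasing in $T$, with limit as $T\to 0$ equal to $4\pi\,\mult_{(0,0)}\tilde C$ in (a), or $8\pi(\tilde C\cdot\Delta_\D)$ in (b). Evaluating at $T=4\sinh^2(r/2)$ and $T=4\sinh^2(r/4)$ respectively yields the stated inequalities. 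The constants $4\pi$ and $8\pi$ are exactly those for which a geodesic disk through $(0,0)$ (respectively $\Delta_\D$ itself) achieves equality --- note that $4\pi\sinh^2(r/2)$ is the hyperbolic area of a disk of radius $r$ in $\D$.

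\textbf{Main obstacle.} The hardest part is establishing the Poincar\'e--Lelong comparison for $\psi_b$: showing that $dd^c\log\sinh^2(d_\D(z,w)/4)$ dominates $\omega_{std}$ up to a definite constant away from $\Delta_\D$, with the correct residue along $\Delta_\D$. This is the substance of Hwang--To's generalisation of Federer's ball estimate from a point to a totally geodesic complex submanifold, and requires a careful transverse-slice analysis in the polydisk. A secondary subtlety is pinning down the sharp constants, which is ultimately dictated by the extremal configurations just noted.
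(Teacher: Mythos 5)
First, note that the paper does not prove this statement: Theorem \ref{HT} is quoted from Hwang--To (\cite[Theorem 2]{hwangto1} and \cite[Theorem 1]{hwangto2}), and the paper only recalls a summary of the proof of part (b) for the reader's convenience. Your sketch does identify the same general mechanism that Hwang--To use and that the paper's summary records --- plurisubharmonic potentials with a logarithmic pole along the special locus, Lelong numbers, and a Stokes comparison against a potential for (a multiple of) $\omega_{std}$ --- so in spirit you are on the right track.

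There are, however, two genuine problems. The first is your ``reduction to $\D\times\D$'' for part (b): the tube $B(\Delta,r)$ deformation retracts onto $\Delta\cong X$, which is a compact hyperbolic curve, so $B(\Delta,r)$ is \emph{not} simply connected and does not lift biholomorphically to $\D\times\D$; as the paper notes, it is the quotient of $B(\Delta_\D,r)$ by the diagonal action of $\pi_1(X)$, and $C\cap B(\Delta,r)$ lifts only to an infinite $\pi_1(X)$-orbit of curves. The correct route (and the one in \cite{hwangto2} and the paper's summary) is the reverse: build potentials $F$, $f_\epsilon$ on $\D\times\D$ that are invariant under the \emph{full} diagonal $\PSL_2(\R)$-action, so that they descend to $X\times X$, and run the Stokes argument downstairs; the hypothesis $r<\rho_X$ is what guarantees the descended $f_\epsilon$ is still plurisubharmonic off $\Delta$ and agrees with $F$ outside the tube. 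The second problem is that your ``monotonicity formula'' step --- the inequality $dd^c\log\psi\geq c\,\omega_{std}$ with a constant sharp enough to produce $8\pi\sinh^2(r/4)$ --- is asserted rather than proved, and you yourself flag it as the main obstacle; but this quantitative comparison \emph{is} the theorem. In particular it is not clear that your specific candidate $\psi_b=4\sinh^2(d_\D(z,w)/4)$ satisfies it with the sharp constant (Hwang--To's potential is obtained by solving an ODE for the radial profile, much as in the paper's Proposition \ref{htd}). Finally, a bookkeeping error: with $T=4\sinh^2(r/2)$ the limit of $\vol(\tilde C\cap\{\psi_a<T\})/T$ as $T\to0$ must be $\pi\,\mult_{(0,0)}\tilde C$, not $4\pi\,\mult_{(0,0)}\tilde C$, or else your conclusion would exceed the sharp bound by a factor of $4$ (the extremal disk has area exactly $4\pi\sinh^2(r/2)$).
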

Both statements in Theorem \ref{HT} are optimal in the sense that the bound is realized:  by a union of fibers in part (a) and by a union of diagonal translates of the graph of $-z:\D\into\D$ in part (b).  For the convenience of the reader, we summarize a different proof of part (b) of Theorem \ref{HT} than that given in \cite{hwangto2} as the same framework will also yield the relative bounds we require.  

Any point $(z,w)\in \D\times\D$ lies on a diagonal translate of the graph of $-z:\D\into \D$ and it will be convenient to define a function
\[\mu(z,w):=\tanh^2(d_{\mathbb{D}}(z,w)/4)\]
measuring the Euclidean distance of $(z,w)$ from 0 in this ``antidiagonal" disk.  Indeed, we have
\[\chi(z,w):=\tanh^2 (d_{\mathbb{D}}(z,w)/2)=\left| \frac{w-z}{1-\bar z w}\right|^2.\] 
\begin{lemma}\label{mupluri}$\log\mu$ is plurisubharmonic.
\end{lemma}
\begin{proof}This follows from a direct computation.  Since we have
\begin{equation}\log \mu=-2\tanh^{-1}\sqrt{1-\chi}\label{whatmu}\end{equation}
the plurisubharmonicity of $\log\mu$ also follows from the criterion of \cite[Lemma 5]{hwangto2} (indeed, the function on the right hand side of \eqref{whatmu} is chosen to satisfy the differential equation therein).
\end{proof}
It follows that the function $F=-8\pi\log(1-\mu)$ considered by Hwang and To is plurisubharmonic and satisfies $\omega_{\D\times\D}\geq dd^c F$.  Both functions $\mu,\chi$ are diagonally invariant under the action of $\SL_2\R$ and therefore descend to $B(\Delta,r)\subset X\times X$ provided $r<\rho_X/2$.  Define
\[I(r):=\int_{C\cap B(\Delta,r)}dd^c F.\]
Note that on the one hand $\vol(C\cap B(\Delta,r))\geq I(r)$, while on the other hand we can show that $I(r)$ grows at least as fast as $a(r)$:
\begin{lemma}\label{increasinglemma}  For $r<\rho_X/2$, $\frac{1}{a(r)}I(r)$ is an increasing function of $r$.
\end{lemma}
\begin{proof}Set $f(s)=-8\pi\log(1-e^s)$.  We have by Stokes' theorem
\begin{align}
I(r)&= \int _{C\cap B(\Delta,r)}dd^cF\notag\\
&=f'(\log \tanh^2(r/2))\int_{C\cap B(\Delta,r)} dd^c \log \mu\notag\\
&=  8\pi\sinh^2(r/2)\int_{C\cap B(\Delta,r)}dd^c\log \mu.\label{lastline}
\end{align}
Indeed, since $f$ is indistinguishable at the boundary of $B(\Delta,r)$ to a linear function of slope $f'(r)$, we can approximate $f$ by such a function without changing the integral on the interior.  As $\log\mu$ is plurisubharmonic, from \eqref{lastline} it follows that $\frac{1}{\sinh^2(r/2)}I(r)$ is an increasing function and the claim follows.
\end{proof}
We therefore have
\[\vol(C\cap B(\Delta,r))\geq I(r)\geq a(r)\cdot\lim_{r\into 0}\frac{1}{a(r)}I(r)\]
and to conclude Theorem \ref{HT}(b), we need only compute the limit.  Using \eqref{lastline} we have
\[\lim_{r\into 0}\frac{1}{a(r)}I(r)=2 \cdot \lim_{r\into 0}\int_{C\cap B(\Delta,r)}dd^c \log\mu\]
and by a local computation (see \cite{hwangto2} for details) the right hand side is bounded by twice the multiplicity along the diagonal,
\begin{equation}\lim_{r\into 0}\int_{C\cap B(\Delta,r)}dd^c \log\mu\geq (C\cdot\Delta).\end{equation}

We will also require an analogue of the above theorem for the diagonal
\[\Delta_2=\{((x,y,x,y)\}\subset (X\times X)^2.\]
Around $\Delta_2$ we have the (Kobayashi) tubular neighborhood considered in the previous section
\[B(\Delta_2,r)=\{(x_1,x_2,y_1,y_2)\mid d(x_1,y_1)<2r\textrm{ and } d(x_2,y_2)<2r\}\subset (X\times X)^2\]
for any $r<\rho_X/2$, and it is the quotient of the analogous diagonal neighborhood $ B(\Delta_2,r)\subset (\D\times\D)^2$ by the diagonal action of $\pi_1(X)^2$.  We thank the referee for providing a more streamlined version of the authors' original argument for the following
\begin{lemma}\label{htbd} For $X,r$ as in Theorem \ref{HT} and for any curve $C\subset (X\times X)^2$ not contained in $\Delta_2$, we have
\[\frac{1}{a(r)}\vol(C\cap B(\Delta_2,r))\geq 2\sum_{\xi\in\Delta_2}\mult_\xi (C).\]

\end{lemma}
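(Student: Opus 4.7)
The plan is to adapt Hwang--To's proof of Theorem \ref{HT}(b) by pulling back their potential along one of the two natural projections.  Let $p_x, p_y : (X \times X)^2 \to X \times X$ be given by $p_x(x_1,y_1,x_2,y_2) = (x_1, x_2)$ and $p_y(x_1,y_1,x_2,y_2) = (y_1,y_2)$, and set $V_1 := p_x^{-1}(\Delta)$ and $V_2 := p_y^{-1}(\Delta)$, so that $\Delta_2 = V_1 \cap V_2$ and $B(\Delta_2, r) = p_x^{-1}B(\Delta, r) \cap p_y^{-1}B(\Delta, r)$.

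Assuming first that $C \not\subset V_1$, set $g_\epsilon := p_x^* f_\epsilon$ and $G := p_x^* F$ on $(X \times X)^2$, where $f_\epsilon$ and $F$ are the potentials on $X \times X$ recalled just after Theorem \ref{HT}.  Pullback along the submersion $p_x$ preserves plurisubharmonicity and Lelong numbers, so $g_\epsilon$ is plurisubharmonic with logarithmic pole along $V_1$ and Lelong number $\nu(g_\epsilon, \xi) = 8\sinh^2(r/4)$ at every point $\xi \in V_1$; moreover $\omega_{g_\epsilon} = p_x^*\omega_{f_\epsilon} \leq p_x^*\omega_{std} \leq \omega_{std}$ as closed positive currents on $(X \times X)^2$, and $g_\epsilon$ agrees with the smooth form $G$ outside $p_x^{-1}B(\Delta, r)$.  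Applying the Lelong-number comparison used in the proof of Theorem \ref{HT}(b) to the well-defined positive measure $[C] \wedge \omega_{g_\epsilon}$ and retaining only the atomic contributions at the points of $C \cap \Delta_2 \subset C \cap V_1 \cap B(\Delta_2, r)$ yields
\[\vol(C \cap B(\Delta_2, r)) = \int_{B(\Delta_2, r)} [C] \wedge \omega_{std} \;\geq\; \int_{B(\Delta_2, r)} [C] \wedge \omega_{g_\epsilon} \;\geq\; 8\pi\sinh^2(r/4) \sum_{\xi \in \Delta_2} \mult_\xi C.\]
The case $C \subset V_1$, which forces $C \not\subset V_2$ by the hypothesis that $C$ is not contained in $\Delta_2$, is handled identically after swapping $p_x$ for $p_y$.

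The delicate step is the Lelong-number comparison $\nu([C] \wedge \omega_{g_\epsilon}, \xi) \geq \nu(g_\epsilon, \xi)\,\mult_\xi C$ at each $\xi \in C \cap \Delta_2$; this is exactly Proposition 2.2.1(a) of \cite{hwangto1} as cited in the proof sketch of Theorem \ref{HT}(b), and it applies here because $C$ avoids the pole divisor $V_1$ of $g_\epsilon$.  I note in passing that a slightly stronger bound $16\pi\sinh^2(r/4)$ is available (when $C \not\subset V_1 \cup V_2$) by instead using the potential $p_x^* f_\epsilon + p_y^* f_\epsilon$ and exploiting the additivity of Lelong numbers along the transverse pole divisors $V_1$ and $V_2$, but this refinement is not needed for the stated lemma.
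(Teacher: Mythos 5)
Your overall strategy (pull back the Hwang--To potential to $(X\times X)^2$ and run the Lelong-number argument) is the right one, but pulling back along only \emph{one} of the two projections creates a genuine gap. Write $V_1=p_x^{-1}(\Delta)$ as you do. The chain of inequalities hinges on comparing $\int_{C\cap B(\Delta_2,r)}\omega_{g_\epsilon}$ with $\vol(C\cap B(\Delta_2,r))$, and your justification --- that $\omega_{g_\epsilon}=p_x^*\omega_{f_\epsilon}\leq\omega_{std}$ ``as closed positive currents'' --- is false: $\omega_{f_\epsilon}$ has a nontrivial singular part along the diagonal (this is exactly what produces the atoms of $[C]\wedge\omega_{g_\epsilon}$ whose mass you are trying to capture), so it is not dominated by the smooth form $\omega_{std}$. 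If it were, $[C]\wedge\omega_{g_\epsilon}$ would have no atoms at all. The only correct route is the one sketched after Theorem \ref{HT}: bound $\vol$ below by $\int\omega_G$ pointwise and then convert $\int_{C\cap\Omega}\omega_G$ into $\int_{C\cap\Omega}\omega_{g_\epsilon}$ by Stokes, which requires $g_\epsilon=G$ near $\partial\Omega$. With $\Omega=B(\Delta_2,r)$ and $g_\epsilon=p_x^*f_\epsilon$, this fails: on the part of $\partial B(\Delta_2,r)$ where $d(y_1,y_2)=r$ but $d(x_1,x_2)<r$, the point lies inside $p_x^{-1}B(\Delta,r)$ and $g_\epsilon\neq G$ there, so the boundary term does not vanish and has no definite sign. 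What your single-pullback argument does prove is a lower bound for $\vol(C\cap p_x^{-1}B(\Delta,r))$, the volume in a neighborhood of the bigger divisor $V_1$ --- a strictly weaker statement, and not the one needed in Proposition \ref{ramificationdiag}, where the precise overlap geometry of the sets $B(\Delta_g,\delta\log p)$ matters.

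The paper's fix is to set $g_\epsilon=\max(p_x^*f_\epsilon,\,p_y^*f_\epsilon)$ and $G=\max(p_x^*F,\,p_y^*F)$. Since $F$ is increasing in the distance and $f_\epsilon\leq F$ with equality outside $B(\Delta,r)$, one checks that $g_\epsilon=G$ on the entire complement of $B(\Delta_2,r)$ (whichever coordinate pair is at distance $\geq r$ realizes the max), so Stokes applies on exactly the region $B(\Delta_2,r)$; and the Lelong number of a maximum of plurisubharmonic functions is the \emph{minimum} of the Lelong numbers, so one still gets $8\sinh^2(r/4)$ at each point of $\Delta_2=V_1\cap V_2$. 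Your closing remark about improving the constant to $16\pi\sinh^2(r/4)$ via the sum $p_x^*f_\epsilon+p_y^*f_\epsilon$ has the same defect in aggravated form: the sum agrees with $p_x^*F+p_y^*F$ only outside the \emph{union} $p_x^{-1}B(\Delta,r)\cup p_y^{-1}B(\Delta,r)$, so it bounds the volume in that union, not in $B(\Delta_2,r)$.
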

\begin{proof} Let $\pi_i: (X\times X)^2\into X\times X$ be the projections onto the $x_i,y_i$ coordinates respectively, and let $F_i=\pi_i^* F$ and $\mu_i=\pi^*_i\mu$.  Also set $M=\tanh^2(r/2)$ and consider the integral
\[I_i(r):=\int_{C\cap \{\mu_j< \mu_i< M\}}dd^c F_i.\]
Stokes' theorem gives
\begin{align*}
I_i(r)&=\int_{C\cap\{\mu_j< \mu_i=M\}}d^c F_i-\int_{C\cap\{\mu_j= \mu_i< M\}}d^c F_i\\
&=8\pi\sinh^2(r/2)\int_{C\cap\{\mu_j< \mu_i=M\}}d^c \log \mu_i -\int_{C\cap\{\mu_j= \mu_i< M\}}d^c F_i
\end{align*}
but since
\[\int_{C\cap\{\mu_j< \mu_i< M\}}dd^c \log \mu_i =\int_{C\cap\{\mu_j< \mu_i=M\}}d^c \log \mu_i -\int_{C\cap\{\mu_j= \mu_i< M\}}d^c \log \mu_i \]
bounds the multiplicity of $C$ at all points $\xi\in\Delta_2$ in the closure of $\{d(x_j,y_j)< d(x_i,y_i)< 2r\}$, we have 
\begin{align*}\frac{1}{2a(r)}(I_1(r)+I_2(r))&\geq \sum_\xi \mult_\xi (C)+\sum_i\int_{C\cap\{\mu_j= \mu_i< M\}}d^c\log\mu_i-\frac{1}{8\pi\sinh^2(r/2)}d^cF_i\\
&=\sum_\xi \mult_\xi (C)+\sum_i\int_{C\cap\{\mu_j= \mu_i< M\}}\left(1-\frac{\sinh^2(d(x_i,y_i)/4)}{\sinh^2(r/2)}\right)d^c\log\mu_i\\
&=\sum_\xi \mult_\xi (C)+\int_{C\cap\{\mu_2= \mu_1< M\}}\left(1-\frac{\sinh^2(d(x_1,y_1)/4)}{\sinh^2(r/2)}\right)d^c\log\mu_1/\mu_2
\end{align*}
the last line taking into account the induced orientations.  The integrand on the right is positive (as $\log\mu_1/\mu_2$ cuts out $C\cap \{\mu_2=\mu_1\}$ on $C$), so the claim follows by taking the $r\into 0$ limit of the right hand side, since we certainly have 
\[\vol(C\cap B(\Delta_2,r))\geq I_1(r)+I_2(r).\]
\end{proof}


\mysect{Relative volume estimates}
We now refine the strategy of Lemma \ref{increasinglemma} to gain better control over the growth of the volume of a curve $C\subset X\times X$ contained within a tube around the diagonal $\Delta\subset X\times X$.

\begin{prop}\label{htd}Let $X$ be a compact hyperbolic complex curve and $C\subset X\times X$ a complex curve that is not the diagonal.  Then for $r<\rho_X/2$,
\[\frac{1}{\cosh(r)}\vol(C\cap B(\Delta,r))\]
is an increasing function of $r$.
\end{prop}
\begin{remark}
The coefficient in Proposition \ref{htd} is presumably not optimal, but we will only care about its asymptotic behavior.  Note that the statement requires the curvature to be $-1$; for the metric of constant sectional curvature $-\frac{1}{\lambda^2}$, we would have that

\[\frac{1}{\cosh(r/\lambda)}\vol(C\cap B(\Delta,r))\]
is increasing.
\end{remark}
\begin{proof}
Let $f(s)=\log\left(\frac{s}{1-s}\right)$.  We have as currents
\begin{align}
dd^c(f\circ\chi)&=-\frac{1}{2\pi}dd^c \log (|1-\bar z w|^2-|z-w|^2)+dd^c\log|z-w|^2\notag\\
&=-\frac{1}{2\pi}dd^c\log \left[(1-|z|^2)(1-|w|^2)\right]+[\Delta_{\mathbb{D}}] \notag\\
&=\frac{1}{4\pi}\omega_{\D\times\D}+[\Delta_{\mathbb{D}}].\notag
\end{align}
Since $\chi=\frac{4\mu}{(1+\mu)^2}$, we have $\frac{\chi}{1-\chi}=\frac{4\mu}{(1-\mu)^2}$ and
\[g(\log\mu):=\log\left(\frac{4\mu}{(1-\mu)^2}\right)=f\circ\chi.\]
As in Lemma \ref{increasinglemma}, by Stokes' theorem we have
\begin{align}
J(r):=\frac{1}{4\pi}\vol(C\cap B(\Delta,r))+(C.\Delta)&= \int _{C\cap B(\Delta,r)}dd^c(f\circ \chi)\notag\\
&=g'(\log \tanh^2(r/2))\int_{C\cap B(\Delta,r)}dd^c\log\mu\notag\\
&=  \cosh(r)\int_{C\cap B(\Delta,r)}dd^c\log\mu.\notag
\end{align}
By Lemma \ref{mupluri}, $\log\mu$ is plurisubharmonic, so $\frac{1}{\cosh(r)}J(r)$ is an increasing function and the claim follows.
\end{proof}

\section{Multiplicity    estimates}\label{mult}
Consider the product $(X(p)\times X(p))^n$.  Denote by $\pi_i$ the projection onto the $i$th copy of $X(p)\times X(p)$ and $\pi_{ij}:=\pi_i\times\pi_j$.  For the proof of Theorem \ref{gonality}, we will need to control the ramification of curves $C\subset (X(p)\times X(p))^n$ over their image in $\Sym^n Z(p)$.  Such ramification occurs when $C$ passes through one of the sets
\[\begin{array}{cccc}\pi_i^{-1}(\CM),&\pi_i^{-1}(\CUSP),&\pi_{ij}^{-1}(\Delta_g),\end{array}\]
for some $i,j$, where $\Delta_g\subset (X(p)\times X(p))^2$ is the diagonal considered in Section \ref{diagsect}.
Recall that for $C\subset (X(p)\times X(p))^n$ we define 
$$\Deg (C):=K_{(X(p)\times X(p))^n}\cdot C$$
Note that $\Deg (C)=\frac{1}{2\pi}\vol(C)$ with our usual normalization.  In this section we prove that incidence of $C$ along each of these sets is negligible with respect to $\Deg(C)$ for large $p$.  For any set $S$ of (closed) points, let $\mult_S(C)=\sum_{x\in S}\mult_x(C)$.  Recall that by $T_m^*C$ we mean the pullback of the divisor $C$ along the Hecke correspondence in the second variable; clearly $\Deg(T_m^* C)=\deg (T_m)\cdot \Deg(C)$.  

By Corollary \ref{biginj}, $\rho_{X(p)}\sim 2\log p$.  Throughout this section the key observation is that for any fixed $t>0$, for sufficiently large $p$ we have 
\[\frac{\vol(C)}{a(t\cdot \rho_{X(p)})}=O(p^{-2t}\Deg (C)).\] 

\mysect{Multiplicity in $X(p)\times X(p)$}

\begin{prop} \label{ramification} For all sufficiently small $\delta>0$ and all sufficiently large $p$, and for any non-Hecke curve $C\subset X(p)\times X(p)$, $$\mult_{\CM} (C)=O(p^{-\delta}\Deg(C)).$$\end{prop}
\begin{proof}  For $d=p^\delta$, partition $\CM$ into two sets \[T:=\CM\cap\cup_{m<d}T_m\hspace{.2in}
\mathrm{and}\hspace{.2in}S:=\CM- T.\]  By Proposition \ref{repulsion}, for sufficiently small $\delta'>0$ the balls $B(\xi,\delta'\rho_{X(p)})$ are disjoint as $\xi$ varies over $S$. Using Corollary \ref{biginj}, we then have that
\begin{align*}
\mult_{\CM}(C)&= \mult_S(C) + \mult_T(C)\\
&\ll \frac{1}{a(\delta'\rho_{X(p)})}\sum_{\xi\in S}\vol\left(C\cap  B(\xi,\delta'\rho_{X(p)})\right) + \sum_{m<d} (C.T_m)\\
&\ll \frac{\vol(C)}{a(\delta'\rho_{X(p)})} + \sum_{m<d} (T_m^*C.\Delta)\tag{Thm. \ref{HT}(a)}\\
&\ll \frac{\vol(C)}{a(\delta'\rho_{X(p)})} + \frac{\vol(C)}{a(\rho_{X(p)}/2)}\sum_{m<d}\deg (T_m)\tag{Thm. \ref{HT}(b)}\\
&\ll (p^{-2\delta'}+d^3p^{-1})\Deg(C)
\end{align*}
since $\deg (T_m)=O(d^2)$ and the result follows.
\end{proof}

\begin{prop} \label{ramificationcusp}
For all sufficiently small $\delta>0$, all sufficiently large $p$, and for any non-Hecke curve $C\subset X(p)\times X(p)$, $$p\mult_{\CUSP}(C)= O(p^{-\delta}\Deg(C)).$$
\end{prop}
\begin{proof}

Take $d=p^\delta$, and again partition the points of $\CUSP$ into
 \[T=\CUSP\cap\cup_{m<d}  T_m\hspace{.2in}
\mathrm{and}\hspace{.2in} S=\CUSP - T.\]
By Proposition \ref{repulsionbicusp}, for sufficiently small $\delta'>0$ any point in $X(p)\times X(p)$ is in at most two of the balls $B(\xi,(1/2+\delta')\rho_{X( p)})$ for $\xi\in S$.  By Theorem \ref{HT}(a), for each $\xi\in \CUSP$,
\[\mult_{\xi}(C) \ll  \frac{1}{a((1/2+\delta')\rho_{X( p)})}\vol(C\cap B(\xi,(1/2+\delta')\rho_{X( p)})))\]
 and it therefore follows that
 \begin{align}
\mult_{S}(C)
&\ll \frac{\vol(C)}{a((1/2+\delta')\rho_{X( p)})}  \notag\\
&\ll p^{-1-2\delta'}\Deg(C) .\label{last2}
\end{align}
Now for any $m<d$,
\begin{align*}
\mult_{\CUSP\cap T_m}(C) &=\sum_{\xi\in \CUSP\cap T_m} \mult_\xi (C)\\
&\ll  \sum_{\xi\in\Delta\cap \CUSP}\mult_\xi (T_m^* C)\\
&\ll (T_m^*C. \Delta)\\
&\ll \frac{d^2\cdot \vol(C)}{a(\rho_{X(p)}/2)}\\
\end{align*}
by Theorem \ref{HT}(b). Thus,
$$\mult_{T}(C)\ll p^{-1+3\delta}\Deg(C).$$ 
and combining with equation \eqref{last2}, the result follows.

\end{proof}
\mysect{Multiplicity in $(X(p)\times X(p))^2$}

\begin{prop} \label{ramificationdiag}

For all sufficiently small $\delta>0$, all sufficiently large $p$, and any curve $C\subset (X(p)\times X(p))^2$ not contained in any diagonal $\Delta_g$,
$$\sum_g\mult_{\Delta_g} C = O(p^{-\delta}\Deg(C)).$$

\end{prop}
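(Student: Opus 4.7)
Following the template of Propositions \ref{ramification}, \ref{ramificationminus}, and especially \ref{ramificationcusp}, I would first obtain a multiplicity--volume reduction using Lemma \ref{htbd}. Each twisted diagonal $\Delta_g \subset (X(p)\times X(p))^2$ is the isometric $G(p)^2$-translate of $\Delta_2$, so Lemma \ref{htbd} applies verbatim to each $\Delta_g$ and gives $\sinh^2(r/4)\mult_{\Delta_g} C \ll \vol(C \cap B(\Delta_g, r))$. Setting $r = \delta\log p$, so that $\sinh^2(r/4)\gg p^{\delta/2}$, and summing over $g \in G(p)$ yields
\[
p^{\delta/2}\sum_{g}\mult_{\Delta_g}C \ll \int_C N(\xi, r)\, d\vol_C,
\]
where $N(\xi,r) = \#\{g \in G(p) : \xi \in B(\Delta_g, r)\}$. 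The task reduces to bounding the right-hand side by $O(p^{\delta/2 - \delta'}\vol(C))$ for some $\delta' > 0$.

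Proposition \ref{repulsiondiag} then partitions $C$ into three pieces: a generic piece $C_{gen}$ where $N(\xi, r) = O(\log p)$; a piece $C_a$ of points within $(1+O(\delta))\log p$ of a \emph{double} singular bicusp; and a piece $C_b$ of points within $O(\delta\log p)$ of a small Hecke curve $\tau_{g,m}$ of degree $m = p^{O(\delta)}$. The contribution of $C_{gen}$ is trivially $O(\log p \cdot \vol(C))$, absorbable after dividing by $p^{\delta/2}$. For $C_a$, Proposition \ref{repulsiondiag}(a) supplies the quantitative bound $N(\xi, r) \ll p^{1+\delta/2}e^{-d_\xi}$, where $d_\xi = \min_i d(\pi_i(\xi), \CUSP)$. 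I would combine this with the Federer--Hwang--To estimate of Theorem \ref{HT}(a) applied to double bicusps, now viewed as $2$-dimensional subvarieties of $(X(p)\times X(p))^2$, using that distinct double-bicusps are separated by $\geq 2\log p - 2$ in at least one projection (Lemma \ref{disksep}) and that the overlap multiplicity of $(1+O(\delta))\log p$-balls around them is at most $p^{O(\delta)}$ (Proposition \ref{repulsioncusp}(c)). A layer-cake decomposition of $\int_{C_a} e^{-d_\xi}\, d\vol_C$ then transfers the cusp multiplicity bound from Proposition \ref{ramificationcusp} (applied, in effect, to each projection $\pi_i C$) into an integrated estimate that is absorbable once $\delta$ is taken small relative to the repulsion exponent $12\delta$ appearing in Proposition \ref{repulsioncusp}(c).

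For $C_b$ I would follow a Hecke-pullback strategy in the spirit of Propositions \ref{ramificationcusp} and \ref{ramificationminus}. Each $\tau_{g,m}$ projects under $\pi_2$ onto a Hecke divisor $T_m \subset X(p)\times X(p)$ with $m < p^{O(\delta)}$, and if $\xi \in B(\Delta_g, r)\cap B(\Delta_{g'}, r)$ then $h = g^{-1}g'$ approximately stabilizes $\pi_2(\xi)$; near $T_m$ such $h$ are rigidified by the same arithmetic lifting argument used in the proofs of Propositions \ref{repulsion} and \ref{repulsiondiag}, so that $N(\xi, r) = p^{O(\delta)}$ on $C_b$. To bound $\vol(C \cap C_b)$, I would pull $C$ back along the correspondences $T_m^*$ in the $\pi_2$-factor and use Theorem \ref{HT}(b), together with the relative estimate Proposition \ref{htd}, to reduce a neighborhood of $\cup_{g,m}\tau_{g,m}$ to a neighborhood of the standard diagonal in $(X(p)\times X(p))^2$, exactly as $\mult_{\CM^-\cap \bar T_m}(C) \ll \deg T_m \cdot \mult_{\CM^-\cap \bar\Delta}(T_m^* C)$ is used in Proposition \ref{ramificationminus}. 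Summing over $m < p^{O(\delta)}$ and using $\deg T_m = \sigma_1(m) = p^{O(\delta)}$ gives a total contribution of $p^{O(\delta)}\vol(C)$ to $\int_{C_b} N\, d\vol$, which after dividing by $p^{\delta/2}$ is $O(p^{-\delta'}\vol(C))$ for $\delta$ sufficiently small.

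The main obstacle is the treatment of $C_b$. Unlike the point-repulsion cases of Propositions \ref{ramification}--\ref{ramificationcusp}, Proposition \ref{repulsiondiag}(b) does not supply a numerical bound on $N(\xi, r)$, so both the count of nearby diagonals and the volume estimate for $C$ near small Hecke curves must be produced separately by Hecke-correspondence arguments. Moreover, the exponents coming from the Hecke degree $\sigma_1(m)$, from the near-bicusp overlap count in case (a), and from the repulsion gain $p^{\delta/2}$ all have to be balanced simultaneously, in the manner of the parameter chain $\delta < \delta'' < \delta'$ appearing at the end of Proposition \ref{ramificationcusp}.
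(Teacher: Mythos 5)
Your proposal follows essentially the same route as the paper: Lemma \ref{htbd} to convert multiplicity along each $\Delta_g$ into volume in $B(\Delta_g,\delta\log p)$, the trichotomy of Proposition \ref{repulsiondiag} to split off a generic $O(\log p)$-overlap piece, a layer-cake argument with the quantitative $p^{1+\delta/2}e^{-d}$ bound near double bicusps, and a Hecke-pullback reduction of the small-Hecke-curve piece to a neighborhood of the standard diagonal controlled by Proposition \ref{htd}. The only caveat is in your final accounting for $C_b$: the decisive saving there is not the $p^{\delta/2}$ Lelong-number gain (which cannot absorb a $p^{O(\delta)}$ loss for all small $\delta$) but the factor $p^{-1+O(\delta)}$ that Proposition \ref{htd} yields for the volume within $O(\delta\log p)$ of the diagonal relative to the injectivity radius $2\log p$ -- the paper secures this by projecting to a single factor $X(p)\times X(p)$ and using that $T_k$ is volume-preserving for $h_{Y(p)}$, a metric-switching step worth making explicit.
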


For the proof, we shall need both metrics $h_{Y(p)}$ and $h_{X(p)}$.  We denote volume with respect to $h_{Y(p)}$ by $\vol'$ instead of $\vol$, and likewise when we consider balls in the $h_{Y(p)}$ metric we write $B'$ instead of $B$.  $\vol'$ and $\vol$ are comparable in the following sense:
\begin{lemma} \label{newlemma} For any curve $C\subset X(p)\times X(p)$ and any open set $U\subset X(p)\times X(p)$,
\[\vol'(C\cap U)-\vol(C\cap U)=O(p^{-1}\vol(C))\]
\end{lemma}
\begin{proof}Since the metrics on $X(p)\times X(p)$ are the sum of pullbacks of the metrics on each factor,
$$\vol'(C)=\frac{\vol'(X(p))}{\vol(X(p))}\vol(C).$$  By Gauss--Bonnet, $\vol(X(p))$ (resp. $\vol'(X(p))$) is a multiple of the euler characteristic $\chi(X(p))$ (resp. $\chi(Y(p))$).  We have $\chi(Y(p))=\chi(X(p))-\#(\mathrm{cusps})$, so we compute
\[\vol'(C)-\vol(C)=O(p^{-1}\vol(C))\]
since $\chi(X(p))\sim -p^3$ and $\#(\mathrm{cusps})\sim p^2$.  As $h_{X(p)}\leq h_{Y(p)}$, $\vol'-\vol$ is a positive measure, and the result follows.
\end{proof}  
\begin{proof}[Proof of Proposition \ref{ramificationdiag}]
Let $\pi_i:(X(p)\times X(p))^2\into X(p)\times X(p)$ be the two projections and assume $C$ has larger degree $d$ along the first.  By Proposition \ref{repulsiondiag}, the neighborhoods $B(\Delta_g,\delta\rho_{X(p)})$ only overlap more than $O(\log p)$ times within $O(\delta)\rho_{X(p)}$ of a small Hecke curve $\tau_{g,k}$ with $k=p^{O(\delta)}$ or within $(1/2+O(\delta))\rho_{X(p)} $ of a point which projects to a singular bicusp along both projections $\pi_i$.  Let $E$ be the sum of the volumes of the intersection of $C$ with these latter balls, and let $T_k^c$ denote the points of $T_k$ not within $(1/2-O(\delta))\rho_{X(p)} $ of a singular bicusp. Likewise, denote by $\tau_{g,k}^c$ the points of $\tau_{g,k}$ which are not within $(1/2-O(\delta))\rho_{X(p)} $ of a singular bicusp in either projection $\pi_i$.

We have by Lemma \ref{htbd}
\begin{align*}
\sum_g\mult_{\Delta_g} C&\ll \frac{1}{a(\delta\rho_{X(p)}/2)}\sum_g\vol(C\cap B(\Delta_g,\delta\rho_{X(p)}))\\
&\ll \frac{1}{a(\delta\rho_{X(p)}/2)}\left(O(\log p)\cdot \vol(C) + \sum_{k=p^{O(\delta)}}\sum_g\vol(C\cap B(\tau^c_{g,k},O(\delta)\rho_{X(p)}))+E\right) \\
&\ll p^{-\delta}\Deg (C) + \frac{1}{a(\delta\rho_{X(p)}/2)}\left(d\cdot p^{O(\delta)}\sum_{k=p^{O(\delta)}}\sum_{i}\vol(\pi_{i}(C)\cap B(T^c_k,O(\delta)\rho_{X(p)}))+E\right)\\
\end{align*}
where we have bounded the maximum multiplicity of the overlaps of the neighborhoods $B(\tau^c_{g,k},O(\delta)\rho_{X(p)})$ for fixed $k$ by $p^{O(\delta)}$, using Lemma \ref{c2dist}.
The key observation now is that $T_k$ is an \'etale correspondence on $Y(p)$ and therefore preserves the metric $h_{Y(p)}$.  By Proposition \ref{metcom1}, the two metrics are comparable away from the cusp; precisely, on the set $U\subset X(p)\times X(p)$ of points neither of whose coordinates is within $(1/2-O(\delta))\rho_{X(p)} $ of a cusp, there is a constant $A$ only depending on $\delta$ such that
\[\frac{1}{A}\cdot h_{Y(p)\times Y(p)}|_U\leq h_{X(p)\times X(p)}|_U\leq A \cdot h_{Y(p)\times Y(p)}|_U.\]  Thus, at the cost of increasing the implicit constant in $O(\delta)$ we have $B(T^c_k,O(\delta)\rho_{X(p)}))\subset B'(T^c_k,O(\delta)\rho_{X(p)}))$.  Likewise the volume forms $\vol'$ and $\vol$ are within a constant (only depending on $\delta$) of each other on $U$, so 
\begin{align*}
&\ll p^{-\delta}\Deg(C)+\frac{d\cdot p^{O(\delta)}}{a(\delta\rho_{X(p)}/2)}\sum_{k=p^{O(\delta)}}\sum_i\vol'(\pi_{i}(C)\cap B'(T^c_k,O(\delta)\rho_{X(p)}))+\frac{E}{a(\delta\rho_{X(p)}/2)}\\
&\ll p^{-\delta}\Deg(C)+\frac{d\cdot p^{O(\delta)}}{a(\delta\rho_{X(p)}/2)} \sum_{k=p^{O(\delta)}}\sum_i\vol'(T_k^*\pi_{i}(C)\cap B'(\Delta,O(\delta)\rho_{X(p)}))+\frac{E}{a(\delta\rho_{X(p)}/2)}\\
&\ll p^{-\delta}\Deg(C)+\frac{d\cdot p^{O(\delta)}}{a(\delta\rho_{X(p)}/2)} \sum_{k=p^{O(\delta)}}\sum_i\vol'(T_k^*\pi_{i}(C)\cap B(\Delta,O(\delta)\rho_{X(p)}))+\frac{E}{a(\delta\rho_{X(p)}/2)}\\
\end{align*}
where in going from line 2 to line 3 we've used the upper bound in Proposition \ref{metcom1}.  By Lemma \ref{newlemma}, the middle term above is bounded by
 \begin{align*}\frac{d}{a(\delta\rho_{X(p)}/2)}   \sum_{k=p^{O(\delta)}}\sum_i\left( \vol(T_k^*\pi_{i}(C)\cap B(\Delta,O(\delta\rho_{X(p)}))) + p^{-1}\vol(T_k^*\pi_1(C))\right)\\\ll p^{O(\delta)-1}\Deg(C).\end{align*}
It remains to bound $E$. Note that by Proposition \ref{repulsiondiag}$$E\ll\sum_{\xi\in\CUSP}\sum_{\ell=1}^{(1/2+O(\delta))\rho_{X(p)}} p^{1+\delta}e^{-\ell}\vol(C\cap B(\xi,\ell)).$$

At the cost of increasing $\delta$ by a constant factor, by Proposition \ref{htd} $$E\ll p^{-\delta}\log p\sum_{\xi\in \CUSP} \vol( C\cap B(\xi,(1/2+\delta)\rho_{X(p)})).$$

By Proposition \ref{repulsionbicusp} the balls $ B(\xi,(1/2+\delta)\rho_{X(p)})$ are all disjoint for distinct bicusps $\xi,\xi'\in\CUSP$ except when $\xi,\xi'$ both lie on some $T_k$ with $k=p^{O(\delta)}$. Thus by applying, in order, Proposition \ref{metcom1}, Lemma \ref{heckepullback}, Proposition \ref{repulsioncusp}(c), Lemma \ref{newlemma}, and Proposition \ref{htd}, we have that
\begin{align*}
E &\ll p^{-\delta/2}\vol(C) + p^{-\delta/2}\sum_{k=p^{O(\delta)}} \sum_{\xi\in\CUSP\cap T_k} \vol(C\cap B(\xi,(1/2+\delta)\rho_{X(p)}))\\
&\ll p^{-\delta/2}\vol(C) +  p^{-\delta/2}\sum_{k=p^{O(\delta)}} \sum_{\xi\in\CUSP\cap T_k} \vol'(C\cap B(\xi,(1/2+\delta)\rho_{X(p)}))\\
&\ll p^{-\delta/2}\vol(C) +  p^{-\delta/2}\sum_{k=p^{O(\delta)}} \sum_{\xi\in\CUSP\cap\Delta} \vol'(T_k^*C\cap B(\xi,(1/2+O(\delta))\rho_{X(p)}+O(1)))\\
&\ll p^{-\delta/2}\vol(C) +  p^{O(\delta)}\sum_{k=p^{O(\delta)}} \vol'(T_k^*C\cap B(\Delta,(1/2+O(\delta))\rho_{X(p)}+O(1)))\\
&\ll p^{-\delta/2}\vol(C) +  p^{O(\delta)}\sum_{k=p^{O(\delta)}} \vol(T_k^*C\cap B(\Delta,(1/2+O(\delta))\rho_{X(p)} +O(1)))\\
&\ll p^{-\delta/2}\vol(C) + p^{O(\delta)-1/2}\vol(C).
\end{align*}

Taking $\delta$ sufficiently small establishes the proposition.

\end{proof}

\section{Proof of the Main Theorem}
\label{mainsect}
Recall that for a proper algebraic curve $B$, the gonality $\gon(B)$ of $B$ is the smallest integer $d$ for which there is a degree $d$ map $B\into \P^1$.  For example, $\gon(B)=1$ if and only if $B\cong \P^1$, and $B$ is said to be hyperelliptic if $\gon(B)=2$.  In particular, every genus 1 (or 2) curve is hyperelliptic, but it is easy to show that there are hyperelliptic curves of every genus $g>0$.

In general the gonality of a curve $B$ is difficult to compute, but it is always bounded in terms of the genus $g=g(B)$,
\[\gon(B)\leq g(B)+1\]
by Riemann--Roch.  In fact it is well known from Brill--Noether theory that
\begin{equation}\gon(B)\leq \left\lfloor \frac{g(B)+3}{2}\right\rfloor\label{gon}\end{equation}
is a strict inequality in the sense that a generic curve $B$ will achieve the bound in \eqref{gon}. We now prove the main theorem:

\begin{theorem}  \label{gonality}For any $N>0$, there exists $M_N>0$ such that for any smooth curve $B$ of gonality $n<N$,  any nonconstant map $B\into Z(p)$ factors through a Hecke curve, provided $p>M_N$.
\end{theorem}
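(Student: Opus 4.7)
The plan is to derive a contradiction from the assumption that $V\rightarrow Z(p)$ is nonconstant and does not factor through any Hecke curve, provided $p$ is large enough in terms of $B$. The degree-$n$ gonality pencil $f\colon V\rightarrow \P^1$ combines with $V\rightarrow Z(p)$ to give a morphism $\P^1\to \Sym^n Z(p)$ sending $t$ to the multiset $f^{-1}(t)\to Z(p)$; its image $V'$ is a rational curve (it cannot collapse to a point, else $V\to Z(p)$ would have finite, hence singleton, image). Let $C$ be an irreducible component of the preimage of $V'$ in $(X(p)\times X(p))^n$ under the Galois cover
\[
\nu\colon (X(p)\times X(p))^n\longrightarrow \Sym^n Z(p)
\]
with Galois group $G_n=\PGL_2(\F_p)^n\rtimes S_n$, so that $C\rightarrow V'$ is finite of some degree $D\leq |G_n|$. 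The strategy is to estimate $g(C)$ in two ways --- from below using projections of $C$ to the copies of $X(p)$, and from above using Riemann--Hurwitz for $C\rightarrow V'\cong \P^1$ --- and show these bounds are incompatible for $p$ large.

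For the lower bound, equip $(X(p)\times X(p))^n$ with the product of the metrics $h_{X(p)}$ from Section \ref{hyper}. Writing $p_k:C\to X(p)$ for the $2n$ coordinate projections and $d_k=\deg(p_k)$, the standard K\"ahler form decomposes as the sum of the pulled-back forms so that $\vol(C)=\sum_k d_k\vol(X(p))$. Since $V\to Z(p)$ is nonconstant, at least one $d_k\geq 1$. Applying Riemann--Hurwitz to any nontrivial $p_k$ gives $g(C)-1\geq d_k(g(X(p))-1)$, so taking the maximum over $k$ and using $\max_k d_k\gg(\sum_k d_k)/n$ we obtain
\[
g(C)-1\;\gg\; \frac{\vol(C)}{n\,\vol(X(p))}\bigl(g(X(p))-1\bigr)\;\gg\;\frac{\vol(C)}{n}.
\]

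For the upper bound, since $V'$ is rational, Riemann--Hurwitz for $C\rightarrow V'\cong\P^1$ gives $g(C)-1 = -D + \tfrac12\deg R \leq \tfrac12\deg R$, where $R$ is the ramification divisor. The branch locus of $\nu$ decomposes into irreducible components of two types: those in $\pi_k^{-1}(\CM^+\cup\CM^-\cup\CUSP)$ for some $k$, coming from the $\PGL_2(\F_p)^n$-quotient, with ramification indices $2$, $3$, or $p$; and those in $\pi_{ij}^{-1}(\bigcup_g \Delta_g)$ for $i\neq j$, coming from the $S_n$-quotient, with ramification index $2$. Accordingly $\deg R$ decomposes as a sum over intersection points of $C$ with these components weighted by the corresponding ramification indices. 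The hypothesis that $V\to Z(p)$ does not factor through any Hecke curve ensures that no component of $\pi_k(C)\subset X(p)\times X(p)$ lies in a Hecke correspondence $T_m$, and no component of $\pi_{ij}(C)\subset (X(p)\times X(p))^2$ lies in any twisted diagonal $\Delta_g$; so Propositions \ref{ramification}, \ref{ramificationminus}, \ref{ramificationcusp}, and \ref{ramificationdiag} all apply. Crucially, the factor of $p$ arising from the large ramification at the cusps is exactly absorbed by the $p\cdot\mult_{\CUSP}$ bound of Proposition \ref{ramificationcusp}, and together these yield
\[
\deg R\;\leq\; O(p^{-\delta})\,\vol(C)
\]
for some small $\delta>0$, with implicit constants depending only on $n$.

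Combining the two bounds gives $\vol(C)/n \ll p^{-\delta}\vol(C)$, hence $1\ll n\, p^{-\delta}$. Since $n<B$ is bounded, this contradicts $p$ being larger than some constant $C_B$ depending only on $B$, finishing the proof. The principal technical obstacle is the passage in the upper bound from the ramification divisor of $C\to V'$ to the geometric multiplicities of the projections $\pi_k(C)$ and $\pi_{ij}(C)$ along special loci, which the estimates of Section \ref{mult} control: at a point where $C$ meets the branch divisor of $\nu$ transversely the local ramification is just the local index of $\nu$, but one must take care with non-transverse intersections and with singularities of $C$ to ensure the contributions are bounded by precisely those geometric multiplicities that the propositions of Section \ref{mult} estimate.
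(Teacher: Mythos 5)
Your proposal is correct and follows essentially the same route as the paper: lift the gonality pencil to a curve $C$ in $(X(p)\times X(p))^n$, bound $g(C)$ from below via the coordinate projections to $X(p)$ and from above via Riemann--Hurwitz for $C\to\P^1$, with the ramification term controlled by Propositions \ref{ramification}--\ref{ramificationdiag} and the factor of $p$ at bicusps absorbed by Proposition \ref{ramificationcusp}. The one point you flag as a technical obstacle --- converting ramification of $C\to\P^1$ into the multiplicities of the projections $\pi_i(C)$, $\pi_{ij}(C)$ along the special loci --- is handled in the paper by the bookkeeping $|(\pi_i\circ\psi)^{-1}(S)|\leq |H_i|\,\mult_S(\pi_i\circ\psi(C))$ together with $\vol(C)=|H_i|\,\vol(\pi_i\circ\psi(C))$, so the stabilizer orders cancel exactly as your argument requires.
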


\begin{proof}

Suppose not, so that for arbitrarily large $p$ we have a smooth curve $B\into Z(p)$ of bounded gonality $n$ not factoring through a Hecke curve, which we may assume is degree 1 onto its image. The degree $n$ linear system on $B$ gives a map $\phi:\P^1\into \Sym^nZ(p)$ which is also degree 1 onto its image. Let $\psi:C\into (X(p)\times X(p))^n$ be the normalization of an irreducible component of the pullback of $\phi$ to $(X(p)\times X(p))^n$, and $\alpha:C\rightarrow \P^1$ the resulting map:
\[\xymatrixcolsep{3pc}\xymatrix{C\ar[d]_\alpha\ar[r]^-\psi &(X(p)\times X(p))^n\ar[d]\\
\P^1\ar[r]_-\phi&\Sym^nZ(p)}\]

The Galois group $G$ of $(X(p)\times X(p))^n$ over $\Sym^nZ(p)$ is an extension
\[1\into G(p)^n\into G\into S_n\into 1\]
and if $H\subset G$ is the stabilizer of $C$, then $H\backslash C=\P^1$.  Let $C_i=\pi_{i*}C$, and for any $\xi\in X(p)\times X(p)$ by $\mult_\xi (C_i)$ we will mean the multiplicity of $\pi_i\circ\psi:C\into X(p)\times X(p) $ at $\xi$---that is, $\mult_{\xi}(\pi_i\circ\psi (C))$ times the degree of $C$ over its image.  We adopt a similar convention for $C_{ij}=\pi_{ij*}C$ and the multiplicity of $C_{ij}$.

We will bound the degree $\Ram(\alpha)$ of the ramification divisor of $\alpha$.  A point $Q\in C$ ramifies only if $\xi=\psi(Q)$ is in
$$\Delta_g^{ij}: = \{(x_1,y_1,\dots,x_n,y_n,)\mid x_i=gx_j, y_i=gy_j\}\subset (X(p)\times X(p))^n$$
for some $i,j\in\{1,\dots,n\}$ and $g\in G(p)$, or if there exists some $i\in\{1,\dots,n\}$ such that
$\pi_i(\xi)$ is either a singular bicusp or a CM point.  The analytic local stabilizer of $Q$ is a cyclic subgroup of $H$ and therefore of order $O(p)$.  The ramification index of $\alpha$ at $Q$ is then also $O(p)$, and in fact if $\xi$ does not project to a singular bicusp in any projection, the index is $O(1)$ (bounded by $6n!$).  It follows therefore that
 $$\Ram(\alpha)\ll p\sum_i|(\pi_i\circ\psi)^{-1}(\CUSP)|+\sum_i|(\pi_i\circ\psi)^{-1}(\CM)|+\sum_{i,j,g}|(\pi_{ij}\circ\psi)^{-1}(\Delta_g)|.$$

Our multiplicity estimates then give us control over these three terms:
\begin{enumerate}
\item $|\psi^{-1}(\xi)|$ is bounded by the multiplicity $\mult_\xi C$, so
\[|(\pi_i\circ\psi)^{-1}(\CUSP)|\leq\mult_{\CUSP}(C_i).\]
By Proposition \ref{ramificationcusp} we have
\[p|(\pi_i\circ\psi)^{-1}(\CUSP)|=o(\Deg C_i).\]
Note that $\Deg (C_i)=\deg ((\pi_i\circ\psi)^*K_{X(p)\times X(p)})$ accounts for the degree of $C$ over its image under $\pi_i$.

\item Likewise,
\[|(\pi_i\circ\psi)^{-1}(\CM)|\leq \mult_{\CM} (C_i)=o(\Deg C_i)\]
by Proposition \ref{ramification}.  \item Finally, we similarly have
\[\sum_g|(\pi_{ij}\circ\psi)^{-1}(\Delta_g)|\leq \sum_g\mult_{\Delta_g}(C_{ij})=o(\Deg C_{ij})\]
by Proposition \ref{ramificationdiag}.
\end{enumerate}
Thus, $\Ram(\alpha)=o(\Deg C)$, and Riemann--Hurwitz applied to $\alpha$ yields
\[2g(C)-2=o(\Deg C).\]
However, if $d$ is the largest degree of the projections $C\into X(p)$, then as $\Deg(C)=(K\cdot C)$ where $K$ is the canonical divisor of  $(X(p)\times X(p))^n$, we have
\[4\pi nd(2g(X(p))-2)\geq \Deg (C).\]
Riemann--Hurwitz applied to this projection yields
\[2g(C)-2\geq d(2g(X(p))-2)\gg \Deg (C)\]
which is a contradiction.\end{proof}

By the remarks preceding Theorem \ref{gonality}, we obtain as a corollary the following weaker result:
\begin{cor}
\label{genus} For $p>M_N$, every genus $g<N$ curve on $Z(p)$ is a Hecke curve.  
\end{cor}
Corollary \ref{genus} in particular answers a question first posed by Kani and Schanz \cite{kani}:
\begin{cor}\label{rational}For sufficiently large $p$, every rational or elliptic curve in $Z(p)$ is a Hecke curve.
\end{cor}

The surface $Z(p)$ has cyclic quotient singularities, each locally analytically isomorphic to the quotient of $\C^2$ by $\Z/n\Z$ acting by $i\cdot (z,w)=(\zeta^iz,\zeta^{ai}w )$ for a primitive $n$th root of unity $\zeta$ and some $0<a<n$.  For $\CM$ points $n=2$ or $3$, while for singular bicusps $n=p$.  The minimal resolution $q:\tilde Z(p)\into Z(p)$ resolves such a singular point into a chain of smooth rational curves whose intersection form is determined by the continued fraction expansion of $\frac{n}{a}$.

Corollary \ref{rational} also resolves a conjecture of Hermann \cite{hermann}:

\begin{cor}\label{minimal}For all sufficiently large $p$, the minimal model of $Z(p)$ is obtained from $\tilde Z(p)$ by blowing down ``known" curves, \emph{i.e.} by blowing down strict transforms of Hecke curves and curves contracted by $\tilde Z(p)\into Z(p)$.
\end{cor}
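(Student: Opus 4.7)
The plan is to reduce the statement directly to Corollary \ref{genus}; everything else is bookkeeping in classical surface theory.

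First, I would recall Hermann's result \cite{hermann} that $Z(p)$ is of general type for $p > 11$, from which it follows that the smooth projective resolution $\tilde Z(p)$ is also of general type. Hence $\tilde Z(p)$ admits a unique smooth projective minimal model $Z_{\min}$, and the natural birational morphism $g : \tilde Z(p) \to Z_{\min}$ is a composition of contractions of $(-1)$-curves on successive intermediate surfaces. A standard fact from the theory of surfaces then gives that every irreducible component $D$ of the exceptional locus of $g$ is a smooth rational curve in $\tilde Z(p)$: each such $D$ appears as a $(-1)$-curve at some stage of the iterated contraction, and the total transform of a smooth rational curve under blowdown of a $(-1)$-curve remains a tree of smooth rational curves.

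The key step is then to push such a $D$ forward along the minimal resolution $q : \tilde Z(p) \to Z(p)$. Either $q(D)$ is a point, in which case $D$ is by definition an irreducible component of the exceptional divisor of $q$ and is thus contracted by $\tilde Z(p) \to Z(p)$; or $q(D)$ is a curve, in which case $q|_D$ is birational onto $q(D)$ and the latter is rational since $D \cong \P^1$. In the second case, for $p$ sufficiently large, Corollary \ref{genus} forces $q(D)$ to be a Hecke curve, making $D$ its strict transform. Either way, $D$ is one of the ``known'' curves from the statement, so $g$ contracts only known curves, as required.

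There is no serious obstacle beyond invoking Corollary \ref{genus}, which does all the heavy lifting. The only thing to verify with a little care is that the successive contractions comprising $g$ really do only contract smooth rational curves, but this is a general feature of birational morphisms between smooth projective surfaces and has nothing to do with the specific geometry of $Z(p)$.
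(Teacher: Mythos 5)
Your proposal is correct and follows the same route as the paper: the paper's entire proof is the observation that by Corollary \ref{genus} every rational curve in $\tilde Z(p)$ is either contracted by $q$ or is the strict transform of a Hecke curve, and your argument is simply a careful expansion of that one line (correctly noting that the curves contracted en route to the minimal model are smooth rational curves). No gaps.
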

\begin{proof}  By Corollary \ref{genus} all rational curves in $\tilde Z(p)$ are of this type.
\end{proof}
\bibliography{biblio}
\bibliographystyle{alpha}
\end{document}